\newtheorem{theorem}{Theorem}[section]
\newtheorem{lemma}[theorem]{Lemma}
\theoremstyle{definition}
\newtheorem{definition}[theorem]{Definition}
\newtheorem{corollary}{Corollary}
\theoremstyle{remark}
\newtheorem{remark}[theorem]{Remark}
\numberwithin{equation}{section}
\begin{document}

\title{Topological pressure of proper map}

\author{Dongkui Ma*}\thanks{* Corresponding author}
\author{Nuanni Fan}


\address{Department of Mathematics, South China University of Technology,
Guangzhou 510641, P.R. China}
\email{dkma@scut.edu.cn, fannuanni@126.com}

\subjclass[2000]{37C45, 37D35, 37B40.}



\keywords{Topological pressure, Variational principle, multifractal analysis, C-P structure, Proper map}

\begin{abstract}
Based on the Carath\'{e}odory -Pesin structure theory\cite{Pesin}, we introduce three notions of topological pressure of a proper map and provide some properties of these notions. For the proper map of a locally compact separable metric space, we prove some variational principles and give some applications. These are the extensions of results of Pesin, Takens and Verbiski, etc.
\end{abstract}

\maketitle

\section{ Introduction }
Let $f$ be a continuous map acting on a compact metric space $X$ and $\varphi$ a continuous function on $X.$ The notion of topological pressure of $\varphi$ was brought to the theory of dynamical systems by Ruelle\cite{Ruelle} and Walters\cite{Walters1}, and it was further developed by Pesin and Pitskel\cite{PP}. The topological pressure is a key notion in dynamical systems and dimension theory. In\cite{Pesin}, Pesin used the dimension approach to the notion of topological pressure, which was based on the Carath\'{e}odory structure\cite{Ca}(we call it the Carath\'{e}odory -Pesin structure, or briefly, C-P structure). It is a very powerful tool to study dimension theory and dynamical systems. For a proper map, Patr\~{a}o\cite{Patr}, Ma and Cai\cite{MC} introduced some notions of topological entropy.

In this paper, by using the C-P structure, we introduce three notions of topological pressure for a proper map of a metric space. They are the extensions of the classical topological pressures introduced by Walters\cite{Walters1}, Pesin and Pitskel\cite{PP} respectively. Some properties of these notions are provided. For the proper map of a locally compact separable metric space, we prove some variational principles and give some applications. These extend results of Pesin\cite{Pesin}, Takens and Verbitski\cite{TV}, etc.

This paper is organized as follows. In section 2, we introduce the notions of the topological pressure, the lower and upper capacity topological pressure and give some basic properties of them. In section 3, we give some further properties. In section 4, we give some variational principles and applications.

\section{Topological pressure and lower and upper capacity topological pressure introduced in this paper and their some basic properties }

 In this section, by using the C-P structure\cite{Pesin}, the topological pressure and lower and upper capacity topological pressure are introduced for the proper map of a metric space.

 Let $X$ be a topological space and $f:X\rightarrow X$ a proper map, i.e., $f$ is a continuous map such that the pre-image by $f$ of any compact set is compact. An open set is called an admissible open set if the closure or the complement of it is compact. An admissible cover of $X$ is an open and finite cover $\mathcal{U}$ of $X$ such that, for each $U\in\mathcal{U}$, $U$ is an admissible open set.

Let $(X,d)$ be a metric space and denote by $B(x,\delta)$ the open ball centered at $x$ with radius $\delta>0$. The metric $d$ is called admissible\cite{Patr} if the following conditions are satisfied:

(1) If $\mathcal{U}_{\delta}=\{B(x_{1},\delta),\cdots,B(x_{k},\delta)\}$ is a cover of $X$, for every $\delta\in (a,b)$, where $0<a<b$, then there exists $\delta_{\varepsilon}\in (a,b)$ such that $\mathcal{U}_{\delta_{\varepsilon}}$ is admissible.

(2) Every admissible cover of $X$ has a Lebesgue number.

In \cite{MC}, the authors proved that
for a metric space ~$(X,d),$ every admissible cover of $X$ has a ~Lebesgue number, so the (2) in the definition of admissible metric can be deleted. From \cite{Patr}, we see that if $d$ is an admissible metric, then for any $\varepsilon>0$ there exists an admissible cover such that the diameter of this cover is less than $\varepsilon.$
It is easy to see that, if $(X,d)$ is compact, then $d$ is automatically admissible.

 Let $(X,d)$ be a metric space and $f:X\rightarrow X$ a proper map. Given an admissible cover $\mathcal{U}$ of $X$,
 denote by $\mathcal{S}_{m}(\mathcal{U})$ the set of all strings $\mathbf{U}=
(U_{i_{0}}, U_{i_{1}},\cdots, U_{i_{m-1}})$ of length $m=m(\mathbf{U}),$ where $U_{i_j}\in \mathcal{U}, J=0,1,\cdots, m-1.$ We put $\mathcal{S}=\mathcal{S}(\mathcal{U})=\mathop
\bigcup \limits_{m\geq0} \mathcal{S}_{m}(\mathcal{U})$.

To a given string $\mathbf{U}= ( U_{i_{0}}, U_{i_{1}}, \cdots, U_{i_{m-1}} ) \in \mathcal{S}(\mathcal{U})$ we associate the set
$$ X(\mathbf{U})=\{x\in X:f^{j}(x)\in U_{i_{j}},j=0,1,\cdots,m(U)-1\}.$$
It is easy to see that $X(\mathbf{U})=\mathop{\bigcap\limits^{m(U)-1}_{j=0}}f^{-j}U_{i_{j}}$, then $X(\mathbf{U})$ is an admissible open set. Let $\varphi\in C(X,\mathbb{R})$ be bounded, where $C(X,\mathbb{R})$ denotes the space of real-valued continuous functions of $X$. Denote
$(S_{n}\varphi)(x)=\sum_{k=0}^{n-1}\varphi(f^{k}(x)).$
Define the collection of subsets
$$\mathcal{F}=\mathcal{F}(\mathcal{U})=\{X(\mathbf{U}):\mathbf{U}\in \mathcal{S}(\mathcal{U})\}$$
and three functions $\xi,\eta,\psi:\mathcal{S}(\mathcal{U}) \rightarrow
\mathbb{R}^{+}$ as follows
\begin{eqnarray*}
\xi(\mathbf{U})&=&\exp\left(\sup_{x\in
X(\mathbf{U})}(S_{m(\mathbf{U})}\varphi)(x)\right),\\
\eta(\mathbf{U})&=&\exp(-m(\mathbf{U})),\\
\psi(\mathbf{U})&=&m(\mathbf{U})^{-1}.
\end{eqnarray*}
It is easy to verify that the sets $\mathcal{S},\mathcal{F}$ and the functions $\xi,\eta,$ and $\psi$ determine a C-P structure $\tau=\tau(\mathcal{U})=(S,\mathcal{F},\xi,\eta,\psi)$ on $X$. We say that a collection of strings $\mathcal{G}$ covers a set $Z\subset X$ if $\mathop{\bigcup }\limits_{\mathbf{U}\in \mathcal{G}}X(\mathbf{U})\supset Z$. For any set $Z\subset X$ and $\alpha \in \mathbb{R}$, define
\begin{eqnarray*}
M(Z,\alpha,\varphi,\mathcal{U},N)&:=&\inf_{\mathcal{G}}\left\{\sum_{\mathbf{U}\in
\mathcal{G}}\xi(\mathbf{U})\eta(\mathbf{U})^{\alpha}\right\}\\
&=&\inf_{\mathcal{G}}\left \{\sum_{\mathbf{U}\in \mathcal{G}}\exp\left (-\alpha
m(\mathbf{U})
+ \sup_{x\in X(\mathbf{U})}(S_{m(\mathbf{U})}\varphi)(x)\right)\right\},
\end{eqnarray*}
and the infimum is taken over all finite or countable collections of strings $\mathcal{G}\subset S(\mathcal{U})$ such that $m(\mathbf{U})\geq N$ for all $\mathbf{U} \in \mathcal{G}$ and $\mathcal{G}$ covers $Z$. Let
$$m(Z,\alpha,\varphi,\mathcal{U})=\mathop{\lim}\limits_{N\rightarrow +\infty} M(Z,\alpha,\mathcal{U},N).
$$
For every real numbers $\alpha$ introduce
$$\underline{r}(Z,\alpha,\varphi,\mathcal{U})=\mathop{\underline{\lim}}_{N\rightarrow
\infty}R(Z,\alpha,\varphi,\mathcal{U},N),$$
$$\overline{r}(Z,\alpha,\varphi,\mathcal{U})=\mathop{\overline{\lim}}_{N\rightarrow
\infty}R(Z,\alpha,\varphi,\mathcal{U},N),$$
where
$$R(Z,\alpha,\varphi,\mathcal{U},N)=\inf_{\mathcal{G}}\left\{\sum_{\mathbf{U}\in \mathcal{G}}\exp\left(-\alpha
N+\sup_{x\in X(\mathbf{U})}(S_{N}\varphi)(x)\right)\right\},$$
the infimum is taken over all collections of strings $\mathcal{G}\subset S(\mathcal{U})$ such that $m(\mathbf{U})= N$ for all $\mathbf{U}\in \mathcal{G}$ and $\mathcal{G}$ covers $Z$. By the definition of C-P structure, define
\begin{eqnarray*}
P_{Z}(\varphi,\mathcal{U})&:=&\inf\{\alpha:m(Z,\alpha,\varphi,\mathcal{U})=0\}=\sup\{\alpha:m(Z,\alpha,\varphi,\mathcal{U})=\infty\},\\
\underline{CP}_{Z}(\varphi,\mathcal{U})&:=&\inf\{\alpha:\underline{r}(Z,\alpha,\varphi,\mathcal{U})=0\}=\sup\{\alpha:\underline{r}(Z,\alpha,\varphi,\mathcal{U})=\infty\},\\
\overline{CP}_{Z}(\varphi,\mathcal{U})&:=&\inf\{\alpha:\overline{r}(Z,\alpha,\varphi,\mathcal{U})=0\}=\sup\{\alpha:\overline{r}(Z,\alpha,\varphi,\mathcal{U})=\infty\}.
\end{eqnarray*}

\begin{lemma}(\cite{MC})\label{2}
Let ~$(X,d)$ be a metric space, then every admissible cover of $X$ has a ~Lebesgue number.
\end{lemma}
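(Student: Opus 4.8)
The plan is to reduce the statement to the classical Lebesgue number lemma for \emph{compact} metric spaces. Write the given admissible cover as $\mathcal{U}=\{U_{1},\dots,U_{n}\}$ and split the index set according to the two ways in which a member can be admissible: let $A$ be the set of indices $i$ with $\overline{U_{i}}$ compact and $B$ the set of indices $i$ with $X\setminus U_{i}$ compact (if both hold for some $i$, just assign $i$ to one of the two types). The key observation is that admissibility is exactly what confines the ``non-compact part'' of $X$ inside a single member of $\mathcal{U}$. If $B\neq\emptyset$, set $K:=\bigcup_{i\in B}(X\setminus U_{i})$; this is a finite union of compact sets, hence compact, and for any fixed $i_{0}\in B$ we have $X\setminus K=\bigcap_{i\in B}U_{i}\subseteq U_{i_{0}}$, so everything outside the compact set $K$ already lies in one cover element. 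If instead $B=\emptyset$, then $X=\bigcup_{i\in A}U_{i}\subseteq\bigcup_{i\in A}\overline{U_{i}}$ is a finite union of compact sets, so $X$ itself is compact; in that case put $K:=X$.

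Next I would run the usual compactness argument on $K$. Since $\mathcal{U}$ covers $X\supseteq K$, for each $x\in K$ choose $i(x)$ with $x\in U_{i(x)}$ and, using that $U_{i(x)}$ is open, a radius $\rho(x)>0$ with $B(x,2\rho(x))\subseteq U_{i(x)}$. The balls $\{B(x,\rho(x)):x\in K\}$ form an open cover of the compact set $K$; extract a finite subcover $B(x_{1},\rho(x_{1})),\dots,B(x_{m},\rho(x_{m}))$ and set $\delta:=\min_{1\le k\le m}\rho(x_{k})>0$. I claim $\delta$ is a Lebesgue number for $\mathcal{U}$. Let $S\subseteq X$ be nonempty with $\operatorname{diam}S<\delta$. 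If $S\cap K=\emptyset$ then necessarily $B\neq\emptyset$ (since $K=X$ when $B=\emptyset$), and $S$ misses every $X\setminus U_{i}$ with $i\in B$, hence $S\subseteq U_{i_{0}}$. If $S\cap K\neq\emptyset$, pick $y\in S\cap K$ and an index $k$ with $y\in B(x_{k},\rho(x_{k}))$; then for every $z\in S$ one has $d(z,x_{k})\le d(z,y)+d(y,x_{k})<\delta+\rho(x_{k})\le 2\rho(x_{k})$, so $S\subseteq B(x_{k},2\rho(x_{k}))\subseteq U_{i(x_{k})}$. In either case $S$ lies in a member of $\mathcal{U}$, which proves the lemma.

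The only genuine obstacle I anticipate is the first step: realizing that admissibility of the cover is precisely the hypothesis needed to tame non-compactness, i.e.\ that a member with compact complement absorbs all of $X$ outside a fixed compact set, so that the Lebesgue argument only has to be carried out on a compact piece. Once that is in hand the rest is a routine adaptation of the compact case; the one mild technical point worth care is the factor-$2$ inflation of the radii, which is exactly what allows a small set that merely \emph{meets} $K$ (rather than being contained in it) to still be captured by a single $U_{i(x_{k})}$.
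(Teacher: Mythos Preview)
Your argument is correct: the reduction to a compact set $K$ via the admissibility dichotomy, followed by the standard $2\rho$-trick, goes through cleanly (the only unmentioned edge case is $K=\emptyset$, which is trivial since then $X\subseteq U_{i_0}$ and any $\delta>0$ works). However, there is nothing to compare against: the paper does not prove this lemma at all but simply quotes it from \cite{MC}, so your write-up stands on its own rather than paralleling or diverging from an argument in the present text.
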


\begin{theorem}\label{1}~~
Let~$(X,d)$ be a metric space and $d$ an admissible metric, $f:X\rightarrow X$ a proper map, $\varphi\in C(X,\mathbb{R})$ bounded. Then for any ~$Z\subset X$, the following limites exist:
\[P_{Z}(\varphi):=\lim_{|\mathcal{U}|\rightarrow0}P_{Z}(\varphi, \mathcal{U}),\]
\[\underline{CP}_{Z}(\varphi):=\lim_{|\mathcal{U}|\rightarrow0}\underline{CP}_{Z}(\varphi,\mathcal{U}),\]
\[\overline{CP}_{Z}(\varphi):=\lim_{|\mathcal{U}|\rightarrow0}\overline{CP}_{Z}(\varphi,\mathcal{U}),\]
where $\mathcal{U}$ is admissible cover and $|\mathcal{U}|$ denote the diameter of $\mathcal{U},$
i.e., ~$|\mathcal{U}|
=\max\{diam(\mathbf{U}):\mathbf{U}\in\mathcal{U} \}.$
\end{theorem}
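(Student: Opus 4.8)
The plan is to reduce the whole statement to a single comparison estimate between the quantities attached to two admissible covers, one much finer than the other, and then to invoke the standard $\limsup\le\liminf$ argument. For an admissible cover $\mathcal{U}$ set $\gamma(\varphi,\mathcal{U}):=\max_{U\in\mathcal{U}}\sup_{x,y\in U}\abs{\varphi(x)-\varphi(y)}$, the largest oscillation of $\varphi$ on a member of $\mathcal{U}$; since a member of an admissible cover of diameter $\abs{\mathcal{U}}$ has $d$-diameter at most $\abs{\mathcal{U}}$, one has $\gamma(\varphi,\mathcal{U})\to0$ as $\abs{\mathcal{U}}\to0$ by the (uniform) continuity of $\varphi$ with respect to $d$. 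The estimate I aim for is: \emph{if $\mathcal{U},\mathcal{V}$ are admissible covers and $\abs{\mathcal{V}}$ is smaller than a (positive) Lebesgue number of $\mathcal{U}$, which exists by Lemma \ref{2}, then}
\[
P_Z(\varphi,\mathcal{U})\le P_Z(\varphi,\mathcal{V})+\gamma(\varphi,\mathcal{U}),
\]
\emph{and likewise with $P_Z$ replaced by $\underline{CP}_Z$ or by $\overline{CP}_Z$.} This holds for every $Z\subset X$, since $Z$ plays no role other than being the set to be covered.

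To prove the estimate, fix $\mathcal{U},\mathcal{V}$ as above. Because $\abs{\mathcal{V}}$ lies below a Lebesgue number of $\mathcal{U}$, each $V\in\mathcal{V}$ is contained in some member of $\mathcal{U}$; fix a choice $\theta:\mathcal{V}\to\mathcal{U}$ with $V\subseteq\theta(V)$ and extend it coordinatewise to a length-preserving map $\theta:\mathcal{S}(\mathcal{V})\to\mathcal{S}(\mathcal{U})$. From $V\subseteq\theta(V)$ one gets $X(\mathbf{V})\subseteq X(\theta\mathbf{V})$ for every $\mathbf{V}\in\mathcal{S}(\mathcal{V})$, so any collection $\mathcal{G}\subset\mathcal{S}(\mathcal{V})$ covering $Z$ is carried to a collection $\theta(\mathcal{G})\subset\mathcal{S}(\mathcal{U})$ covering $Z$, with the same lower bound on string lengths. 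Delete from $\mathcal{G}$ every string with $X(\mathbf{V})=\emptyset$ (this keeps $Z$ covered and only decreases the sums below) and, for each remaining $\mathbf{V}=(V_{i_0},\dots,V_{i_{m-1}})$, fix a point $y_{\mathbf{V}}\in X(\mathbf{V})$. If $x\in X(\theta\mathbf{V})$ then $f^j(x)$ and $f^j(y_{\mathbf{V}})$ both lie in $\theta(V_{i_j})$ for $0\le j<m$, so $\abs{(S_m\varphi)(x)-(S_m\varphi)(y_{\mathbf{V}})}\le m\,\gamma(\varphi,\mathcal{U})$; taking the supremum over $x$ and using $y_{\mathbf{V}}\in X(\mathbf{V})$,
\[
\sup_{x\in X(\theta\mathbf{V})}(S_{m}\varphi)(x)\le\sup_{x\in X(\mathbf{V})}(S_{m}\varphi)(x)+m\,\gamma(\varphi,\mathcal{U}),\qquad m=m(\mathbf{V}).
\]
Now sum $\exp(-\alpha m(\mathbf{U})+\sup_{X(\mathbf{U})}S_{m(\mathbf{U})}\varphi)$ over $\mathbf{U}\in\theta(\mathcal{G})$: as each such $\mathbf{U}$ is $\theta\mathbf{V}$ for at least one $\mathbf{V}\in\mathcal{G}$ and all terms are positive, this sum is at most $\sum_{\mathbf{V}\in\mathcal{G}}\exp(-\alpha m(\mathbf{V})+\sup_{X(\theta\mathbf{V})}S_{m(\mathbf{V})}\varphi)$, which by the displayed inequality is at most $\sum_{\mathbf{V}\in\mathcal{G}}\exp(-(\alpha-\gamma(\varphi,\mathcal{U}))m(\mathbf{V})+\sup_{X(\mathbf{V})}S_{m(\mathbf{V})}\varphi)$. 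Taking the infimum over all collections $\mathcal{G}$ with all lengths $\ge N$ covering $Z$ gives $M(Z,\alpha,\varphi,\mathcal{U},N)\le M(Z,\alpha-\gamma(\varphi,\mathcal{U}),\varphi,\mathcal{V},N)$ for every $N$, hence $m(Z,\alpha,\varphi,\mathcal{U})\le m(Z,\alpha-\gamma(\varphi,\mathcal{U}),\varphi,\mathcal{V})$; comparing the critical exponents that define $P_Z(\varphi,\cdot)$ yields the asserted inequality. The identical computation with $M(\cdot,N)$ replaced by $R(\cdot,N)$ (strings of length exactly $N$) gives $R(Z,\alpha,\varphi,\mathcal{U},N)\le R(Z,\alpha-\gamma(\varphi,\mathcal{U}),\varphi,\mathcal{V},N)$, and passing to the lower, resp. upper, limit in $N$ produces the inequalities for $\underline{CP}_Z$ and $\overline{CP}_Z$.

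Granting the estimate and $\gamma(\varphi,\mathcal{U})\to0$, the three limits exist by the usual argument. For $\varepsilon>0$ choose, using the fact (recalled after the definition of admissible metric) that admissible covers of arbitrarily small diameter exist, an admissible $\mathcal{U}_0$ with $\abs{\mathcal{U}_0}$ so small that $\gamma(\varphi,\mathcal{U}_0)<\varepsilon$ and $P_Z(\varphi,\mathcal{U}_0)$ is within $\varepsilon$ of $\limsup_{\abs{\mathcal{U}}\to0}P_Z(\varphi,\mathcal{U})$; then every admissible $\mathcal{V}$ with $\abs{\mathcal{V}}$ below a Lebesgue number of $\mathcal{U}_0$ satisfies, by the estimate, $P_Z(\varphi,\mathcal{V})\ge P_Z(\varphi,\mathcal{U}_0)-\gamma(\varphi,\mathcal{U}_0)>\limsup_{\abs{\mathcal{U}}\to0}P_Z(\varphi,\mathcal{U})-2\varepsilon$, so $\liminf_{\abs{\mathcal{V}}\to0}P_Z(\varphi,\mathcal{V})\ge\limsup_{\abs{\mathcal{U}}\to0}P_Z(\varphi,\mathcal{U})-2\varepsilon$; letting $\varepsilon\to0$ gives $\liminf=\limsup$, that is, the limit exists (the cases in which the $\limsup$ is $+\infty$ or $-\infty$ are handled verbatim). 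The same reasoning applies word for word to $\underline{CP}_Z$ and $\overline{CP}_Z$.

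I expect the main work, and the only place where real care is needed, to be the comparison estimate. The delicate points are: the string map $\theta$ need not be injective, so one sums over $\theta(\mathcal{G})$ with possible repetitions, which is harmless precisely because we only want an upper bound; strings $\mathbf{V}$ with $X(\mathbf{V})=\emptyset$ must be deleted \emph{before} the anchors $y_{\mathbf{V}}$ are chosen; and the oscillation error accumulates linearly in the string length, which is why it re-enters not as a multiplicative distortion but as the shift $\alpha\mapsto\alpha-\gamma(\varphi,\mathcal{U})$, so that it evaporates in the limit. The one genuinely non-compact ingredient, absent from Pesin's treatment of compact $X$, is the need for $\gamma(\varphi,\mathcal{U})\to0$ as $\abs{\mathcal{U}}\to0$; in the admissible-metric framework this follows from the observation that admissible covers of small diameter consist of sets of small $d$-diameter, together with the (uniform) continuity of $\varphi$, and this is the step at which the hypotheses on $(X,d)$ and on $\varphi$ are actually used.
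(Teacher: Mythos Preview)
Your argument is essentially the same as the paper's: both hinge on the comparison $M(Z,\alpha,\varphi,\mathcal{U},N)\le M(Z,\alpha-\gamma(\mathcal{U}),\varphi,\mathcal{V},N)$ obtained via the Lebesgue-number refinement map $\mathcal{V}\to\mathcal{U}$, followed by the $\limsup\le\liminf$ conclusion as $\gamma(\mathcal{U})\to0$. You are in fact more careful than the paper on several points the paper glosses over (non-injectivity of the string map, deletion of strings with empty carrier, the anchor-point argument for the oscillation bound), and you correctly flag that the step $\gamma(\varphi,\mathcal{U})\to0$ tacitly uses uniform continuity of $\varphi$, which the paper invokes without comment.
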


\begin{proof}
We use the similar method as that of \cite{Pesin}. By the Lemma \ref{2}, let $\mathcal{V}$ be an admissible cover of $X$ with diameter smaller than the Lebesgue number of admissible cover $\mathcal{U}.$ One can see that each element $V\in \mathcal{V}$ is contained in some element $U(V)\in \mathcal{U}.$ To any string $\mathbf{V}=(V_{i_{0}},\cdots,V_{i_{m}})\in \mathcal{S}(\mathcal{V})$ we associate the string $\mathbf{U}(\mathbf{V})=(U(V_{i_0}),\cdots,U(V_{i_m}))\in \mathcal{S}(\mathcal{U}).$ If $\mathcal{G}\subset \mathcal{S}(\mathcal{V})$ covers a set $Z\subset X$ then $\mathbf{U}(\mathcal{G})=\{\mathbf{U}(\mathbf{V}):\mathbf{V}\in \mathcal{G}\}\subset \mathcal{S}(\mathcal{U})$ also covers $Z.$ Let
~$\gamma=\gamma(\mathcal{U})=\sup\{|\varphi(x)-\varphi(y)|:
x,y\in U, U\in \mathcal{U}\}. $
Then for every $\alpha\in \mathbb{R}$ and $N>0$
$$M(Z,\alpha,\varphi,\mathcal{U},N)\leq M(Z,\alpha-\gamma,\varphi,\mathcal{V},N).$$
We have that
$$P_{Z}(\varphi, \mathcal{U})-\gamma\leq P_{Z}(\varphi, \mathcal{V}). $$
Since $X$ has admissible cover of arbitrarily small diameter, then
$$P_{Z}(\varphi, \mathcal{U})-\gamma\leq \mathop{\underline{\lim}}\limits_{|\mathcal{V}|\rightarrow 0}
P_{Z}(\varphi, \mathcal{V}). $$
If ~$|\mathcal{U}|\rightarrow 0$, then ~$\gamma(\mathcal{U})\rightarrow 0$ and hence
$$\mathop{\overline{\lim}}\limits_{|\mathcal{U}|\rightarrow 0}P_{Z}(\varphi, \mathcal{U})
\leq \mathop{\underline{\lim}}\limits_{|\mathcal{V}|\rightarrow 0}P_{Z}(\varphi, \mathcal{V}).$$
This implies the existence of the first limit.
The others can be proved in a similar fashion.
\end{proof}

We call the quantities $P_{Z}(\varphi)$, $\overline{CP}_{Z}(\varphi,f)$, $\underline{CP}_{Z}(\varphi)$
respectively, the topological pressure and lower and upper capacity topological pressure of the function $\varphi$ on the set $Z$(with respect to $f$). We write $P_{Z,f}(\varphi)$, $\overline{CP}_{Z,f}(\varphi,f)$,$\underline{CP}_{Z,f}(\varphi)$ respectively to emphasize $f$ if we need to.

By the basic properties of the C-P structure\cite{Pesin}, we get the following basic properties.

\begin{theorem}\label{c}
Let~$(X,d)$ be a metric space and $d$ an admissible metric, $f:X\rightarrow X$ a proper map. Let
$\varphi\in C(X,\mathbb{R})$ be bounded. Then

(1)~ $P_{\emptyset}(\varphi)\leq0$.

(2)~ $P_{Z_{1} }(\varphi)\leq P_{Z_{2}}(\varphi)$ if ~$Z_1\subset Z_2\subset X$.

(3)~ $P_{Z}(\varphi)=\sup_{i\geq1}P_{Z_{i}}(\varphi)$, where~$Z=\bigcup_{i\geq1}Z_{i},~Z_{i}\subset X,
i=1, 2, \cdots$.

(4)~ If ~$f$ is a homeomorphism then ~$P_{Z}(\varphi)=P_{f(Z)}(\varphi)$, where ~$Z$ is any subset of~$X$.

\end{theorem}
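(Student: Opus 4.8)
The plan is to deduce Theorem \ref{c} directly from the corresponding abstract properties of the Carath\'{e}odory dimension characteristic associated to a C-P structure, as developed in Pesin's book \cite{Pesin}, applied to the structure $\tau(\mathcal{U})$ on each admissible cover, and then to pass to the limit $|\mathcal{U}|\to 0$ using Theorem \ref{1}. Concretely, for a fixed admissible cover $\mathcal{U}$, the quantity $P_{Z}(\varphi,\mathcal{U})$ is exactly the critical value $\alpha$ at which the set function $m(Z,\alpha,\varphi,\mathcal{U})$ jumps from $\infty$ to $0$; Pesin shows that such a critical value, viewed as a function of $Z$, is monotone, countably stable, and vanishes (or is $\le 0$ here, because of the normalization by $\eta(\mathbf{U})^{\alpha}=\exp(-\alpha m(\mathbf{U}))$ and the boundedness of $\varphi$) on the empty set. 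So the strategy for (1)--(3) is: first establish the statement at the level of a single cover $\mathcal{U}$, then take $\lim_{|\mathcal{U}|\to 0}$.

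For (1): for $Z=\emptyset$ the empty collection $\mathcal{G}=\emptyset$ covers $Z$ for every $N$, so $M(\emptyset,\alpha,\varphi,\mathcal{U},N)=0$ for all $\alpha$ and all $N$, hence $m(\emptyset,\alpha,\varphi,\mathcal{U})=0$ for every $\alpha$; by definition this forces $P_{\emptyset}(\varphi,\mathcal{U})=\inf\{\alpha:m(\emptyset,\alpha,\varphi,\mathcal{U})=0\}=-\infty$ if we allow $-\infty$, or more carefully one uses that the $\xi\eta^{\alpha}$ weights are positive so the infimum over the empty sum is $0$, giving $P_{\emptyset}(\varphi,\mathcal{U})\le 0$; letting $|\mathcal{U}|\to0$ preserves the inequality. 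For (2): if $Z_1\subset Z_2$, then any $\mathcal{G}$ covering $Z_2$ also covers $Z_1$, so $M(Z_1,\alpha,\varphi,\mathcal{U},N)\le M(Z_2,\alpha,\varphi,\mathcal{U},N)$ for all $N$; hence $m(Z_1,\cdot)\le m(Z_2,\cdot)$, and comparing the critical exponents gives $P_{Z_1}(\varphi,\mathcal{U})\le P_{Z_2}(\varphi,\mathcal{U})$, which passes to the limit. For (4): when $f$ is a homeomorphism one checks that $x\in X(\mathbf{U})$ iff $f(x)\in f(X(\mathbf{U}))$ and that $f(X(\mathbf{U}))=\bigcap_{j=0}^{m-1}f^{-(j-1)}U_{i_j}$ up to a shift; the cleaner route is to observe that $f$ is a bijection carrying admissible covers to admissible covers (since $f$ and $f^{-1}$ are proper) and that the C-P structure is conjugation-invariant, so the critical exponents agree for $Z$ and $f(Z)$ at each scale, and the limits coincide. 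This last point uses $(S_{m}\varphi)\circ f^{-1}$ harmlessly since the single term $\varphi(x)$ shifts but the supremum over the string set is unchanged after relabeling.

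The only genuinely nontrivial item is (3), the countable stability $P_{Z}(\varphi)=\sup_{i\ge1}P_{Z_i}(\varphi)$ with $Z=\bigcup_{i\ge1}Z_i$. For a single cover $\mathcal{U}$ this is the standard countable-stability property of Carath\'{e}odory dimensions: the inequality $P_{Z}(\varphi,\mathcal{U})\ge\sup_i P_{Z_i}(\varphi,\mathcal{U})$ is immediate from monotonicity (2), and for the reverse one fixes $\alpha>\sup_i P_{Z_i}(\varphi,\mathcal{U})$, so $m(Z_i,\alpha,\varphi,\mathcal{U})=0$ for every $i$; given $\varepsilon>0$ one chooses for each $i$ and each large $N$ a cover $\mathcal{G}_i$ of $Z_i$ with total weight $<\varepsilon 2^{-i}$, and $\mathcal{G}=\bigcup_i\mathcal{G}_i$ is a countable cover of $Z$ with total weight $<\varepsilon$, whence $m(Z,\alpha,\varphi,\mathcal{U})=0$ and $P_{Z}(\varphi,\mathcal{U})\le\alpha$. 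The subtlety when taking $|\mathcal{U}|\to0$ is that the supremum over $i$ and the limit over covers must be interchanged; this is handled by Theorem \ref{1}, which guarantees the limit exists for every fixed set, so $P_{Z}(\varphi)=\lim_{|\mathcal{U}|\to0}P_{Z}(\varphi,\mathcal{U})=\lim_{|\mathcal{U}|\to0}\sup_i P_{Z_i}(\varphi,\mathcal{U})$, and since each $P_{Z_i}(\varphi,\mathcal{U})$ is monotone-and-convergent in $\mathcal{U}$ (again by Theorem \ref{1} and its proof, where the key estimate $P_{Z}(\varphi,\mathcal{U})-\gamma(\mathcal{U})\le P_{Z}(\varphi,\mathcal{V})$ is uniform in $Z$), one can swap to get $\sup_i\lim_{|\mathcal{U}|\to0}P_{Z_i}(\varphi,\mathcal{U})=\sup_i P_{Z_i}(\varphi)$. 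I expect the bookkeeping in this interchange, ensuring the estimates are uniform in the set $Z$, to be the main obstacle, though it is routine once one notes that $\gamma(\mathcal{U})$ in the proof of Theorem \ref{1} does not depend on $Z$.
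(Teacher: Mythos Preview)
Your proposal is correct and follows exactly the route the paper takes: the paper gives no argument at all for Theorem \ref{c} beyond the single sentence ``By the basic properties of the C-P structure \cite{Pesin}, we get the following basic properties,'' and your sketch simply unpacks those Pesin-style arguments (monotonicity, countable stability via $\varepsilon 2^{-i}$ covers, invariance under the homeomorphism) at the level of a fixed admissible cover and then passes to the limit via Theorem \ref{1}. Your observation that the estimate $P_{Z}(\varphi,\mathcal{U})-\gamma(\mathcal{U})\le P_{Z}(\varphi,\mathcal{V})$ from the proof of Theorem \ref{1} is uniform in $Z$ is exactly what is needed to justify the interchange in (3), and is more explicit than anything the paper provides.
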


\begin{theorem}
Let~$(X,d)$ be a metric space and $d$ an admissible metric, $f:X\rightarrow X$ a proper map. Let $\varphi\in C(X,\mathbb{R})$ be bounded. Then

(1)~ $\underline{CP}_{\emptyset}(\varphi)\leq0,~\overline{CP}_{\emptyset}(\varphi)\leq0$;

(2)~ ~$\underline{CP}_{Z_{1}}(\varphi)\leq \underline{CP}_{Z_{2}}(\varphi),~ \overline{CP}_{Z_{1}}(\varphi)\leq \overline{CP}_{Z_{2}}(\varphi)$ if ~$Z_1\subset Z_2\subset X$.

(3)~ $\underline{CP}_{Z}(\varphi)\geq \sup_{i\geq1}\underline{CP}_{Z_{i}}(\varphi)$ and
$\overline{CP}_{Z}(\varphi)\geq \sup_{i\geq1}\overline{CP}_{Z_{i}}(\varphi)$, where
~$Z=\bigcup_{i\geq1}{Z_{i}}$,~$Z_{i}\subset X, i=1,2,\cdots $.
\end{theorem}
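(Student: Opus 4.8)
The plan is to reduce everything to the level of a single fixed admissible cover $\mathcal{U}$, prove the three assertions for the cover-dependent quantities $\underline{CP}_{Z}(\varphi,\mathcal{U})$ and $\overline{CP}_{Z}(\varphi,\mathcal{U})$, and then pass to the limit $|\mathcal{U}|\to 0$, invoking Theorem~\ref{1} to know that these limits exist and that inequalities between the cover-wise quantities carry over to the limits. So it suffices to establish, for each admissible $\mathcal{U}$: first, $\underline{CP}_{\emptyset}(\varphi,\mathcal{U})\le 0$ and $\overline{CP}_{\emptyset}(\varphi,\mathcal{U})\le 0$; second, $\underline{CP}_{Z_{1}}(\varphi,\mathcal{U})\le\underline{CP}_{Z_{2}}(\varphi,\mathcal{U})$ and $\overline{CP}_{Z_{1}}(\varphi,\mathcal{U})\le\overline{CP}_{Z_{2}}(\varphi,\mathcal{U})$ whenever $Z_{1}\subset Z_{2}$.

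For the empty-set bound, the empty collection $\mathcal{G}=\emptyset$ vacuously consists only of strings of length $N$ and vacuously covers $\emptyset$, and the empty sum is $0$; hence $R(\emptyset,\alpha,\varphi,\mathcal{U},N)=0$ for every $\alpha\in\mathbb{R}$ and every $N$, so $\underline{r}(\emptyset,\alpha,\varphi,\mathcal{U})=\overline{r}(\emptyset,\alpha,\varphi,\mathcal{U})=0$ for all $\alpha$. By the definitions of $\underline{CP}$ and $\overline{CP}$ this gives $\underline{CP}_{\emptyset}(\varphi,\mathcal{U})\le\alpha$ and $\overline{CP}_{\emptyset}(\varphi,\mathcal{U})\le\alpha$ for every $\alpha\in\mathbb{R}$, hence both are $\le 0$, and letting $|\mathcal{U}|\to 0$ yields (1).

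For monotonicity, the one substantive point is monotonicity of the defining infimum in the set: if $Z_{1}\subset Z_{2}$, then any collection $\mathcal{G}\subset\mathcal{S}(\mathcal{U})$ of length-$N$ strings covering $Z_{2}$ also covers $Z_{1}$, so the infimum defining $R(Z_{1},\alpha,\varphi,\mathcal{U},N)$ ranges over a superset of the collections defining $R(Z_{2},\alpha,\varphi,\mathcal{U},N)$; therefore $R(Z_{1},\alpha,\varphi,\mathcal{U},N)\le R(Z_{2},\alpha,\varphi,\mathcal{U},N)$ for all $\alpha$ and $N$. Taking the lower (resp. upper) limit in $N$ gives $\underline{r}(Z_{1},\alpha,\varphi,\mathcal{U})\le\underline{r}(Z_{2},\alpha,\varphi,\mathcal{U})$ and likewise for $\overline{r}$. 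Since these quantities are nonnegative, $\underline{r}(Z_{2},\alpha,\varphi,\mathcal{U})=0$ forces $\underline{r}(Z_{1},\alpha,\varphi,\mathcal{U})=0$, so $\{\alpha:\underline{r}(Z_{2},\alpha,\varphi,\mathcal{U})=0\}\subset\{\alpha:\underline{r}(Z_{1},\alpha,\varphi,\mathcal{U})=0\}$; taking infima gives $\underline{CP}_{Z_{1}}(\varphi,\mathcal{U})\le\underline{CP}_{Z_{2}}(\varphi,\mathcal{U})$, and the same argument works verbatim for $\overline{CP}$. Passing to $|\mathcal{U}|\to 0$ proves (2).

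Finally, (3) will be an immediate consequence of (2): for $Z=\bigcup_{i\ge 1}Z_{i}$ we have $Z_{i}\subset Z$ for every $i$, so $\underline{CP}_{Z_{i}}(\varphi)\le\underline{CP}_{Z}(\varphi)$ and $\overline{CP}_{Z_{i}}(\varphi)\le\overline{CP}_{Z}(\varphi)$ by part (2), and taking the supremum over $i$ yields both inequalities. I do not anticipate any genuine obstacle in this argument; the only point worth emphasizing is that, in contrast with $P_{Z}(\varphi)$ in Theorem~\ref{c}(3), one should not expect equality in (3), since $\underline{r}$ and $\overline{r}$ are assembled from a single common string-length $N$ rather than from a tail over all $N\ge N_{0}$, so the countable subadditivity of $\mathcal{G}\mapsto\sum_{\mathbf{U}\in\mathcal{G}}\xi(\mathbf{U})\eta(\mathbf{U})^{\alpha}$ that powers the reverse inequality for $M(Z,\alpha,\varphi,\mathcal{U},N)$ is not available for the capacity quantities.
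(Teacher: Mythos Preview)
Your argument is correct. The paper does not give an explicit proof of this theorem; it simply states that the result follows ``by the basic properties of the C-P structure'' and cites Pesin's monograph. What you have written is precisely the direct verification that underlies that citation: monotonicity of $R(Z,\alpha,\varphi,\mathcal{U},N)$ in $Z$ at each fixed $N$, passage to the $\liminf$/$\limsup$, the critical-exponent comparison, and then the limit $|\mathcal{U}|\to 0$ via Theorem~\ref{1}. Your treatment of (1) via the empty collection and of (3) as an immediate corollary of (2) is exactly how these facts are proved in the Carath\'eodory--Pesin framework, so there is no substantive difference in approach---you have simply unpacked the reference. One cosmetic remark: your argument for (1) actually yields $\underline{CP}_{\emptyset}(\varphi,\mathcal{U})=\overline{CP}_{\emptyset}(\varphi,\mathcal{U})=-\infty$, which is of course stronger than the stated $\le 0$.
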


\begin{theorem}
Let~$(X,d)$ be a metric space and $d$ an admissible metric, $f:X\rightarrow X$ a proper map. Let $\varphi\in C(X,\mathbb{R})$ be bounded and ~$\mathcal{U}$ an admissible cover of ~$X$. Then for any $Z\subset X,$ we have that
$$\underline{CP}_{Z}(\varphi,\mathcal{U})=\mathop{\underline{\lim}}\limits_{N\rightarrow+\infty}
\frac{1}{N}{\log\Lambda(Z,\varphi,\mathcal{U},N)},
$$
$$\overline{CP}_{Z}(\varphi,\mathcal{U})=\mathop{\overline{\lim}}\limits_{N\rightarrow+\infty}
\frac{1}{N}{\log\Lambda(Z,\varphi,\mathcal{U},N)},
$$
where
$$\Lambda(Z,\varphi,\mathcal{U},N)=\inf_{\mathcal{G}}\left \{\sum_{\mathbf{U}\in \mathcal{G}}\exp\left
(\sup_{x\in X(\mathbf{U})}(S_{N}\varphi)(x)\right)\right\},
$$
the infimum is taken over all finite or countable collections of strings $\mathcal{G}\subset \mathcal{S}(\mathcal{U})$ such that ~$m(\mathbf{U})= N$ for all ~$\mathbf{U}\in \mathcal{G}$ and $\mathcal{G}$ covers ~$Z$.
\end{theorem}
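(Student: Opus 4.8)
The plan is to reduce the whole statement to the elementary identity
$$R(Z,\alpha,\varphi,\mathcal{U},N)=e^{-\alpha N}\,\Lambda(Z,\varphi,\mathcal{U},N),$$
which is immediate from the definitions: in the sum defining $R$ every string $\mathbf{U}\in\mathcal{G}$ has $m(\mathbf{U})=N$, so the constant factor $e^{-\alpha N}$ factors out, and the infimum in $R$ and in $\Lambda$ is taken over exactly the same family of covering collections $\mathcal{G}$ (those with $m(\mathbf{U})=N$ for all $\mathbf{U}$ and $\bigcup_{\mathbf{U}\in\mathcal{G}}X(\mathbf{U})\supset Z$). Since $\mathcal{U}$ is a finite admissible cover of $X$, the set $\mathcal{S}_{N}(\mathcal{U})$ is finite and $\{X(\mathbf{U}):\mathbf{U}\in\mathcal{S}_{N}(\mathcal{U})\}$ covers $X\supseteq Z$, so for $Z\neq\emptyset$ the quantity $\Lambda(Z,\varphi,\mathcal{U},N)$ is finite and strictly positive (boundedness of $\varphi$ gives $e^{N\inf\varphi}\le\Lambda(Z,\varphi,\mathcal{U},N)\le(\#\mathcal{U})^{N}e^{N\sup\varphi}$); hence $\frac{1}{N}\log\Lambda(Z,\varphi,\mathcal{U},N)$ is a well-defined real number, bounded in $N$, and the case $Z=\emptyset$ is trivial.

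Write $s_{-}=\underline{\lim}_{N\to\infty}\frac{1}{N}\log\Lambda(Z,\varphi,\mathcal{U},N)$ and $s_{+}=\overline{\lim}_{N\to\infty}\frac{1}{N}\log\Lambda(Z,\varphi,\mathcal{U},N)$. First I would prove $\underline{CP}_{Z}(\varphi,\mathcal{U})=s_{-}$. If $\alpha>s_{-}$, pick $\varepsilon>0$ with $\alpha-\varepsilon>s_{-}$; then there is a subsequence $N_{k}\to\infty$ with $\frac{1}{N_{k}}\log\Lambda(Z,\varphi,\mathcal{U},N_{k})\le\alpha-\varepsilon$, so by the identity $R(Z,\alpha,\varphi,\mathcal{U},N_{k})\le e^{-\varepsilon N_{k}}\to0$, whence $\underline{r}(Z,\alpha,\varphi,\mathcal{U})=0$ and therefore $\underline{CP}_{Z}(\varphi,\mathcal{U})\le\alpha$; letting $\alpha\downarrow s_{-}$ gives $\underline{CP}_{Z}(\varphi,\mathcal{U})\le s_{-}$. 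Conversely, if $\alpha<s_{-}$, pick $\varepsilon>0$ with $\alpha+\varepsilon<s_{-}$; then $\frac{1}{N}\log\Lambda(Z,\varphi,\mathcal{U},N)\ge\alpha+\varepsilon$ for all large $N$, so $R(Z,\alpha,\varphi,\mathcal{U},N)\ge e^{\varepsilon N}\to\infty$, hence $\underline{r}(Z,\alpha,\varphi,\mathcal{U})=\infty$ and $\underline{CP}_{Z}(\varphi,\mathcal{U})\ge\alpha$; letting $\alpha\uparrow s_{-}$ yields the reverse inequality.

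The computation for $\overline{CP}_{Z}(\varphi,\mathcal{U})=s_{+}$ is the mirror image, with the roles of ``along a subsequence'' and ``for all large $N$'' interchanged: if $\alpha>s_{+}$ then $\frac{1}{N}\log\Lambda(Z,\varphi,\mathcal{U},N)\le\alpha-\varepsilon$ eventually, so $R(Z,\alpha,\varphi,\mathcal{U},N)\le e^{-\varepsilon N}\to0$ and $\overline{r}(Z,\alpha,\varphi,\mathcal{U})=0$, giving $\overline{CP}_{Z}(\varphi,\mathcal{U})\le s_{+}$; if $\alpha<s_{+}$ then $\frac{1}{N_{k}}\log\Lambda(Z,\varphi,\mathcal{U},N_{k})\ge\alpha+\varepsilon$ along a subsequence, so $R(Z,\alpha,\varphi,\mathcal{U},N_{k})\ge e^{\varepsilon N_{k}}\to\infty$ and $\overline{r}(Z,\alpha,\varphi,\mathcal{U})=\infty$, giving $\overline{CP}_{Z}(\varphi,\mathcal{U})\ge s_{+}$. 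I do not expect a genuine obstacle here: everything rests on the single identity $R=e^{-\alpha N}\Lambda$ together with the definitions of $\underline{CP}$ and $\overline{CP}$ as critical exponents. The only point that needs care is matching $\underline{\lim}$ and $\overline{\lim}$ to the correct one-sided (``along a subsequence'' versus ``eventually'') statement, and—if one wants the formulas to be meaningful—recording that boundedness of $\varphi$ and finiteness of $\mathcal{U}$ keep $\frac{1}{N}\log\Lambda(Z,\varphi,\mathcal{U},N)$ bounded, so that $s_{-}$ and $s_{+}$ are finite whenever $Z\neq\emptyset$.
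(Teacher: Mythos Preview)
Your proposal is correct and is precisely the standard argument the paper has in mind: the paper does not write out a proof of this statement but simply records it as one of the ``basic properties of the C-P structure'' inherited from Pesin~\cite{Pesin}, and in Pesin's framework the proof is exactly the factorization $R(Z,\alpha,\varphi,\mathcal{U},N)=e^{-\alpha N}\Lambda(Z,\varphi,\mathcal{U},N)$ followed by the critical-exponent computation you carry out. Your handling of the $\underline{\lim}$/$\overline{\lim}$ bookkeeping and the finiteness remarks (using boundedness of $\varphi$ and finiteness of the admissible cover $\mathcal{U}$) are accurate and complete the details the paper leaves implicit.
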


We can use the analogous methods as that of \cite{Pesin} to prove the following theorems, so we omit the proof.

\begin{theorem}\label{MMMM}
Let~$(X,d)$ be a metric space and $d$ an admissible metric, $f:X\rightarrow X$ a proper map. Let $\varphi,\psi \in C(X,\mathbb{R})$ be bounded. Then for any ~$Z\subset X$, we have that
$$\mid P_{Z}(\varphi)-P_{Z}(\psi)\mid\leq\parallel \varphi-\psi \parallel ,$$
$$\mid \underline{CP}_{Z}(\varphi)-\underline{CP}_{Z}(\psi)\mid\leq\parallel \varphi-\psi \parallel ,$$
$$\mid \overline{CP}_{Z}(\varphi)-\overline{CP}_{Z}(\psi)\mid\leq\parallel \varphi-\psi \parallel ,$$
where $\parallel \varphi \parallel=\sup_{x\in X} \mid \varphi(x)\mid.$
\end{theorem}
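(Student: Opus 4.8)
The plan is to prove the three inequalities one admissible cover at a time and then pass to the limit $|\mathcal{U}|\to 0$ using Theorem \ref{1}. Set $c=\|\varphi-\psi\|$, which is finite since $\varphi$ and $\psi$ are bounded. The whole argument rests on one elementary estimate for Birkhoff sums: for any string $\mathbf{U}$ of length $m=m(\mathbf{U})$ and any $x\in X(\mathbf{U})$,
$$(S_m\varphi)(x)=\sum_{k=0}^{m-1}\varphi(f^k x)\le \sum_{k=0}^{m-1}\bigl(\psi(f^k x)+c\bigr)=(S_m\psi)(x)+mc,$$
and therefore $\sup_{x\in X(\mathbf{U})}(S_m\varphi)(x)\le \sup_{x\in X(\mathbf{U})}(S_m\psi)(x)+mc$.

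First I would feed this into the Carath\'eodory sums defining $M$. For every $\alpha\in\mathbb{R}$, every $N$, and every admissible cover $\mathcal{U}$, each summand satisfies
$$\exp\Bigl(-\alpha m(\mathbf{U})+\sup_{x\in X(\mathbf{U})}(S_{m(\mathbf{U})}\varphi)(x)\Bigr)\le \exp\Bigl(-(\alpha-c)m(\mathbf{U})+\sup_{x\in X(\mathbf{U})}(S_{m(\mathbf{U})}\psi)(x)\Bigr),$$
so, taking the infimum over the same admissible collections $\mathcal{G}$ on both sides, $M(Z,\alpha,\varphi,\mathcal{U},N)\le M(Z,\alpha-c,\psi,\mathcal{U},N)$. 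Letting $N\to\infty$ gives $m(Z,\alpha,\varphi,\mathcal{U})\le m(Z,\alpha-c,\psi,\mathcal{U})$. Consequently $m(Z,\alpha-c,\psi,\mathcal{U})=0$ forces $m(Z,\alpha,\varphi,\mathcal{U})=0$, and from $P_{Z}(\varphi,\mathcal{U})=\inf\{\alpha:m(Z,\alpha,\varphi,\mathcal{U})=0\}$ we read off $P_{Z}(\varphi,\mathcal{U})\le P_{Z}(\psi,\mathcal{U})+c$. Exchanging the roles of $\varphi$ and $\psi$ yields $|P_{Z}(\varphi,\mathcal{U})-P_{Z}(\psi,\mathcal{U})|\le c$, and passing to the limit over covers via Theorem \ref{1} gives the first inequality.

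For the two capacity pressures the argument is identical with $R$ in place of $M$: since every string in the admissible collections now has length exactly $N$, the pointwise estimate shows each summand of $R(Z,\alpha,\varphi,\mathcal{U},N)$ is at most the corresponding summand of $R(Z,\alpha-c,\psi,\mathcal{U},N)$ (the shift $Nc$ is absorbed by rewriting $-\alpha N$ as $-(\alpha-c)N$), hence $R(Z,\alpha,\varphi,\mathcal{U},N)\le R(Z,\alpha-c,\psi,\mathcal{U},N)$ for every $N$. Taking $\underline{\lim}$ and $\overline{\lim}$ gives $\underline{r}(Z,\alpha,\varphi,\mathcal{U})\le\underline{r}(Z,\alpha-c,\psi,\mathcal{U})$ and $\overline{r}(Z,\alpha,\varphi,\mathcal{U})\le\overline{r}(Z,\alpha-c,\psi,\mathcal{U})$; the same infimum-characterization and symmetry step then give $|\underline{CP}_{Z}(\varphi,\mathcal{U})-\underline{CP}_{Z}(\psi,\mathcal{U})|\le c$ and $|\overline{CP}_{Z}(\varphi,\mathcal{U})-\overline{CP}_{Z}(\psi,\mathcal{U})|\le c$, and the limit over covers finishes the proof.

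There is no serious obstacle; the only point requiring a little care is the logical passage from an inequality between the set functions $m(\cdot)$ (respectively $\underline r(\cdot),\overline r(\cdot)$) to an inequality between their critical values — this uses the monotonicity in $\alpha$ that is implicit in the two-sided description of $P_{Z}(\varphi,\mathcal{U})$ as simultaneously an infimum over $\{\alpha:m=0\}$ and a supremum over $\{\alpha:m=\infty\}$ — together with the observation that boundedness of $\varphi,\psi$ is exactly what keeps $c$ finite and the shifted argument $\alpha-c$ meaningful.
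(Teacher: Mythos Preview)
Your proposal is correct and is precisely the ``analogous method'' from \cite{Pesin} that the paper invokes; the paper itself omits the proof entirely, stating only that it follows by the same techniques as in Pesin's book. Your argument --- comparing Birkhoff sums pointwise to get a shift by $c=\|\varphi-\psi\|$ in the exponent, transferring this to an inequality between $M$ (resp.\ $R$) at $\alpha$ and at $\alpha-c$, reading off the inequality between critical values, and then passing to the limit via Theorem~\ref{1} --- is exactly that standard route.
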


\begin{theorem}~~
Let~$(X,d)$ be a metric space and $d$ an admissible metric, $f:X\rightarrow X$ a proper map. Let $\varphi\in C(X,\mathbb{R})$ be bounded.

(1)~ For any ~$Z\subset X,$ if ~$f^{-1}(Z)=Z$, then for any admissible open cover ~$\mathcal{U}$ of ~$X,$
~$\underline{CP}_{Z}(\varphi,\mathcal{U})=\overline{CP}_{Z}(\varphi,\mathcal{U})$. Moreover,
~$\underline{CP}_{Z}(\varphi)=\overline{CP}_{Z}(\varphi).$

(2)~ As the same conditions as that of (1), if ~$Z$ is compact set, then
~$P_{Z}(\varphi, \mathcal{U})=\underline{CP}_{Z}(\varphi,\mathcal{U})=\overline{CP}_{Z}(\varphi,\mathcal{U}).$ Moreover,
~$P_{Z}(\varphi)=\underline{CP}_{Z}(\varphi)=\overline{CP}_{Z}(\varphi).$
\end{theorem}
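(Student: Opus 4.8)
The plan is to prove (1) by showing that $N\mapsto\log\Lambda(Z,\varphi,\mathcal{U},N)$ is subadditive, and then to deduce (2) by combining (1) with a concatenation‑and‑truncation estimate that uses compactness and full invariance of $Z$.

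\emph{Part (1).} By the theorem above expressing $\underline{CP}_Z(\varphi,\mathcal{U})$ and $\overline{CP}_Z(\varphi,\mathcal{U})$ as the lower and upper limits of $\tfrac1N\log\Lambda(Z,\varphi,\mathcal{U},N)$, it suffices to show this limit exists. Assume $Z\neq\emptyset$ (otherwise all three pressures are $-\infty$). For strings $\mathbf{U}$ of length $N$ and $\mathbf{W}$ of length $M$ let $\mathbf{U}\mathbf{W}$ be their concatenation; then $X(\mathbf{U}\mathbf{W})=X(\mathbf{U})\cap f^{-N}X(\mathbf{W})$ and $(S_{N+M}\varphi)(x)=(S_N\varphi)(x)+(S_M\varphi)(f^Nx)$. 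Since $f^{-1}(Z)=Z$ gives $f^{-N}(Z)=Z$, concatenating a cover of $Z$ by length‑$N$ strings with a cover of $Z$ by length‑$M$ strings yields a cover of $Z\cap f^{-N}Z=Z$ by length‑$(N+M)$ strings, and since $\sup_{X(\mathbf{U}\mathbf{W})}S_{N+M}\varphi\le\sup_{X(\mathbf{U})}S_N\varphi+\sup_{X(\mathbf{W})}S_M\varphi$, the sum over the concatenated strings factors and is dominated by the product of the two individual sums. Taking infima gives $\Lambda(Z,\varphi,\mathcal{U},N+M)\le\Lambda(Z,\varphi,\mathcal{U},N)\,\Lambda(Z,\varphi,\mathcal{U},M)$. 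As $\mathcal{U}$ is finite and $\varphi$ bounded, $0<\Lambda(Z,\varphi,\mathcal{U},N)<\infty$, so $a_N:=\log\Lambda(Z,\varphi,\mathcal{U},N)$ is finite and subadditive, and Fekete's lemma gives that $\lim_N a_N/N$ exists. Hence $\underline{CP}_Z(\varphi,\mathcal{U})=\overline{CP}_Z(\varphi,\mathcal{U})$, and letting $|\mathcal{U}|\to0$ via Theorem \ref{1} yields $\underline{CP}_Z(\varphi)=\overline{CP}_Z(\varphi)$.

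\emph{Part (2).} I would first record the general inequalities $P_Z(\varphi,\mathcal{U})\le\underline{CP}_Z(\varphi,\mathcal{U})\le\overline{CP}_Z(\varphi,\mathcal{U})$, intrinsic to the C‑P structure: a collection of strings all of length exactly $N$ is admissible in the infimum defining $M(Z,\alpha,\varphi,\mathcal{U},N)$, so $m(Z,\alpha,\varphi,\mathcal{U})\le\underline{r}(Z,\alpha,\varphi,\mathcal{U})$, whence the first inequality (the second is immediate). By part (1), $\underline{CP}_Z(\varphi,\mathcal{U})=\overline{CP}_Z(\varphi,\mathcal{U})=:c=\lim_N\tfrac1N\log\Lambda(Z,\varphi,\mathcal{U},N)$, so everything reduces to $c\le P_Z(\varphi,\mathcal{U})$. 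Fix $\alpha'>P_Z(\varphi,\mathcal{U})$; then $m(Z,\alpha',\varphi,\mathcal{U})=0$, hence $M(Z,\alpha',\varphi,\mathcal{U},N_0)=0$ for every $N_0$. Fix $\delta\in(0,1)$ and a large $N_0$, and choose a (countable) cover $\mathcal{G}$ of $Z$ by strings of length $\ge N_0$ with $\sum_{\mathbf{U}\in\mathcal{G}}\exp(-\alpha'm(\mathbf{U})+\sup_{X(\mathbf{U})}S_{m(\mathbf{U})}\varphi)<\delta$. Since the sets $X(\mathbf{U})$ are open and $Z$ is compact, I may replace $\mathcal{G}$ by a finite subcover $\{\mathbf{U}_1,\dots,\mathbf{U}_k\}$ of $Z$, with lengths $m_j:=m(\mathbf{U}_j)\in[N_0,m^*]$ and weights $a_j:=\exp(-\alpha'm_j+\sup_{X(\mathbf{U}_j)}S_{m_j}\varphi)$ still summing to less than $\delta$. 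Now invariance enters: $f^{-1}(Z)=Z$ gives $f^n(x)\in Z$ for all $x\in Z$, $n\ge0$, so each $x\in Z$ has an itinerary $\mathbf{U}_{j_0}\mathbf{U}_{j_1}\cdots$ built from the finite subcover; concatenating such blocks and truncating the last one to reach length exactly $N$ produces, for $N>m^*$, a finite collection $\mathcal{H}_N$ of length‑$N$ strings covering $Z$. Using $\sup_{X(\mathbf{U}_j)}S_{m_j}\varphi=\alpha'm_j+\log a_j$ and bounding the truncated last block trivially by $e^{m^*\|\varphi\|}$, one obtains for $N>m^*$
\[
\Lambda(Z,\varphi,\mathcal{U},N)\ \le\ \sum_{\mathbf{V}\in\mathcal{H}_N}\exp(\sup_{X(\mathbf{V})}S_N\varphi)\ \le\ e^{\alpha'N}\,\frac{k\,m^*\,e^{m^*(\|\varphi\|+|\alpha'|)}}{1-\delta},
\]
where the choice of truncation point contributes at most the factor $k\,m^*$ and the proliferation of concatenations is summed by the geometric series $\sum_{s\ge0}\big(\sum_j a_j\big)^s\le(1-\delta)^{-1}$. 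Dividing by $N$ and letting $N\to\infty$ gives $c\le\alpha'$; since $\alpha'>P_Z(\varphi,\mathcal{U})$ was arbitrary, $c\le P_Z(\varphi,\mathcal{U})$. Thus $P_Z(\varphi,\mathcal{U})=\underline{CP}_Z(\varphi,\mathcal{U})=\overline{CP}_Z(\varphi,\mathcal{U})$, and letting $|\mathcal{U}|\to0$ (Theorem \ref{1}) gives the final ``moreover'' equality.

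The step I expect to be the main obstacle is the estimate in part (2): covering a \emph{compact} invariant $Z$ by strings of length \emph{exactly} $N$ with total exponential weight at most $e^{\alpha'N}$ times a merely \emph{sub}exponential factor. Concatenating the small‑weight finite cover yields strings of uneven lengths, so truncating the last block is forced, and the real content is checking that the accompanying bookkeeping — the truncated remainder and the choice of truncation point — stays subexponential; a naive ``extend by all possible strings'' argument would instead cost an exponential factor and fail. This is exactly where compactness of $Z$ is used: to pass to a finite subcover and thereby make $k$ and $m^*$ finite.
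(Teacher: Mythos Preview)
Your argument is correct and is precisely the standard route: subadditivity of $N\mapsto\log\Lambda(Z,\varphi,\mathcal{U},N)$ via concatenation for part~(1), and for part~(2) the finite-subcover concatenation/truncation estimate that turns a small-$M$ cover into length-$N$ covers with only subexponential overhead. The paper does not write out a proof here---it explicitly says the argument is analogous to Pesin's and omits it---so your proposal is exactly the proof the paper is pointing to.
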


\begin{remark}
(1)If $X$ is a compact metric space, then these notions of topological pressure coincide with those notions of topological pressure introduced by Pesin and Pitskel\cite{Pesin,PP}.

(2)If $\varphi=0,$ then these notions of topological pressure coincide with those notions of topological entropy introduced by Ma and Cai\cite{MC}. If $Z=X, \varphi=0,$ then $\underline{CP}_{X}(0)=\overline{CP}_{X}(0)$
coincide the topological entropy introduced by Patr\~{a}o\cite{Patr}.

(3)It is easy to see that
$$P_{Z}(\varphi)\leq\underline{CP}_{Z}(\varphi)\leq\overline{CP}_{Z}(\varphi),~~ \forall Z\subset X.$$
\end{remark}

\section{Some further properties of the topological pressure and lower and upper capacity topological pressure}

In this section, we give further properties of these topological pressures.

 If $X$ is a locally compact separable metric space, we can associate $X$ with its one-point compactification, which is denoted by $\widetilde{X}$. We have that $\widetilde{X}$ is defined as the disjoint union of $X$ with $\{\infty\}$, where $\infty$ is some point not in $X$ called the point at infinity. The topology in $\widetilde{X}$ consist of the former open sets in $X$ and the sets $A\cup\{\infty\}$, where the complement of $A$ in $X$ is compact. Let $f:X\rightarrow X$ be a proper map. Defining $\widetilde{f}:\widetilde{X}\rightarrow\widetilde{X}$ by
$$\widetilde{f}(\widetilde{x})=\left\{\begin{array}{ll}
f(\widetilde{x}),& \widetilde{x}\neq\infty\\
\infty,& \widetilde{x}=\infty,
\end{array}
\right.
$$
we have that $\widetilde{f}$ is also a proper map, called the extension of $f$ to $\widetilde{X}$. We note that the separability of $X$ is equivalent to the metrizability of $\widetilde{X}$.

\begin{lemma}(\cite{Patr})\label{3}~~~
Let ~$X$ be a locally compact separable metric space and $d$ the metric given by the restriction to $X$ of some metric ~$\widetilde{d}$ on ~$\widetilde{X}$, the one-point compactification of ~$X$. Then $d$ is an admissible metric. \end{lemma}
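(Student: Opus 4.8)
The plan is to verify condition (1) in the definition of an admissible metric directly; condition (2) costs nothing, since Lemma \ref{2} guarantees that every admissible cover of a metric space has a Lebesgue number. So fix $0<a<b$ and assume that $\mathcal{U}_\delta=\{B(x_1,\delta),\dots,B(x_k,\delta)\}$ is a cover of $X$ for every $\delta\in(a,b)$, the balls being taken in $(X,d)$. Since $d=\widetilde{d}|_X$, one has $B(x,\delta)=B_{\widetilde d}(x,\delta)\cap X$, where $B_{\widetilde d}$ denotes the ball in $\widetilde{X}$; note also that the closed ball $\widetilde{B}[x,\delta]=\{y\in\widetilde{X}:\widetilde{d}(x,y)\le\delta\}$ is closed in the compact space $\widetilde{X}$ and hence compact, and that every $B(x,\delta)$ is open in $X$.

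The core observation is that, for a fixed center $x\in X$, the ball $B(x,\delta)$ is admissible for every $\delta$ with the possible exception of the single value $\delta=\widetilde{d}(x,\infty)$. Indeed, if $\delta<\widetilde{d}(x,\infty)$ then $\infty\notin\widetilde{B}[x,\delta]$, so $\widetilde{B}[x,\delta]$ is a compact subset of $X$ containing $B(x,\delta)$; the closure of $B(x,\delta)$ in $X$ is a closed subset of this compact set and is therefore compact, so $B(x,\delta)$ is an admissible open set. If instead $\delta>\widetilde{d}(x,\infty)$ then $\infty\in B_{\widetilde d}(x,\delta)$, so $\widetilde{X}\setminus B_{\widetilde d}(x,\delta)$ is closed in $\widetilde{X}$, hence compact, and does not contain $\infty$; since this set coincides with $X\setminus B(x,\delta)$, the ball $B(x,\delta)$ has compact complement in $X$ and is again an admissible open set.

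To conclude, the finitely many ``critical radii'' $\widetilde{d}(x_1,\infty),\dots,\widetilde{d}(x_k,\infty)$ cannot exhaust the interval $(a,b)$, so there is some $\delta_\varepsilon\in(a,b)$ different from all of them; by the dichotomy just established, each $B(x_i,\delta_\varepsilon)$ is an admissible open set, while $\mathcal{U}_{\delta_\varepsilon}$ is a finite cover of $X$ by hypothesis, so $\mathcal{U}_{\delta_\varepsilon}$ is an admissible cover, which is precisely what condition (1) asks for. The only subtle point is the exceptional radius $\delta=\widetilde{d}(x,\infty)$, where $\widetilde{B}[x,\delta]$ may contain $\infty$ even though $B_{\widetilde d}(x,\delta)$ does not, so that a priori neither the closure nor the complement of $B(x,\delta)$ in $X$ need be compact; I expect the main (and essentially only) obstacle to be realizing that one must---and, since $(a,b)$ is infinite, can---choose $\delta_\varepsilon$ to avoid these finitely many bad values, the compactness bookkeeping in the two good cases being entirely routine.
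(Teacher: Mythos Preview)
Your argument is correct. The dichotomy on $\delta$ versus $\widetilde{d}(x_i,\infty)$ is exactly the right idea, and the compactness checks in both cases go through as you describe; the identification $X\setminus B(x_i,\delta)=\widetilde{X}\setminus B_{\widetilde d}(x_i,\delta)$ when $\infty\in B_{\widetilde d}(x_i,\delta)$ is the only point worth a line of justification, and you handle it correctly.

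As for comparison: the paper does not supply its own proof of this lemma---it is quoted from Patr\~{a}o \cite{Patr} and stated without argument. Your proof is essentially the one given there, so you have independently reconstructed the intended argument. There is nothing to add.
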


\begin{theorem}\label{4}~~
Let~$X$ be a locally compact separable metric space and $d$ the metric given by the restriction to $X$ of some metric ~$\widetilde{d}$ on ~$\widetilde{X}$, the one-point compactification of ~$X$. Let $f:X\rightarrow X$ be a proper map.
~$\varphi\in C(X,\mathbb{R})$ can be continuously extended to ~$\widetilde{X}$ denoted by ~$\widetilde{\varphi}$, then for any~$Z\subset X$, we have the that
$$P_{Z,f}(\varphi)=P_{Z,\widetilde{f}}^{PP}(\widetilde{\varphi}),$$
$$\underline{CP}_{Z,f}(\varphi)=\underline{CP}_{Z,\widetilde{f}}^{PP}(\widetilde{\varphi}),$$
$$\overline{CP}_{Z,f}(\varphi)=\overline{CP}_{Z,\widetilde{f}}^{PP}(\widetilde{\varphi}),$$
where $P_{Z,\widetilde{f}}^{PP}(\widetilde{\varphi}),\underline{CP}_{Z,\widetilde{f}}^{PP}(\widetilde{\varphi})$ and
$\overline{CP}_{Z,\widetilde{f}}^{PP}(\widetilde{\varphi})$ denote the Pesin-Pitskel topological pressure, lower and upper capacity topological pressure \cite{Pesin,PP}, respectively.
\end{theorem}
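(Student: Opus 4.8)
The plan is to compare the C--P structure on $X$ built from admissible covers with the classical Pesin--Pitskel C--P structure on the compactification $\widetilde{X}$ built from arbitrary open covers, and to show that the two give the same critical values for every $Z\subset X$. By Lemma~\ref{3} the restricted metric $d$ is admissible, so Theorem~\ref{1} applies and the quantities $P_{Z,f}(\varphi)$, $\underline{CP}_{Z,f}(\varphi)$, $\overline{CP}_{Z,f}(\varphi)$ are all well defined as limits over admissible covers of shrinking diameter. Since $Z\subset X$, the point $\infty$ never appears in the orbits being tracked, so the extension $\widetilde{\varphi}$ and the map $\widetilde f$ play a role only through the covers of $\widetilde{X}$.

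The first step is to observe the correspondence between covers. An admissible cover $\mathcal{U}$ of $X$ consists of admissible open sets, and by definition of the one-point compactification each such $U$ is either precompact in $X$ (hence open in $\widetilde{X}$) or has compact complement in $X$, in which case $U\cup\{\infty\}$ is open in $\widetilde{X}$; in either case we obtain an open cover $\widetilde{\mathcal{U}}$ of $\widetilde{X}$ by, if necessary, adjoining $\{\infty\}$ to one element. Conversely, given any open cover $\mathcal{W}$ of $\widetilde{X}$, pick the element $W_0\ni\infty$; its complement in $\widetilde{X}$ is compact, so $W_0\cap X$ has compact complement in $X$, and the remaining elements (intersected with $X$) can be refined to precompact open sets since $X$ is locally compact. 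Thus admissible covers of $X$ are cofinal, up to refinement, in the open covers of $\widetilde{X}$, and the diameters match up to the behaviour near $\infty$. The key computation is then that, for a string $\mathbf{U}=(U_{i_0},\dots,U_{i_{m-1}})$ with all $U_{i_j}\in\mathcal{U}$ (and corresponding string $\widetilde{\mathbf{U}}$ in $\widetilde{\mathcal{U}}$), one has $X(\mathbf{U})=X\cap\widetilde{X}(\widetilde{\mathbf{U}})$ whenever none of the $U_{i_j}$ contains $\infty$, while the strings using the $\infty$-element of $\widetilde{\mathcal{U}}$ contribute sets that can only help cover points of $\widetilde{X}\setminus X$ and are therefore irrelevant for covering $Z\subset X$; the Birkhoff sums $(S_m\varphi)(x)$ and $(S_m\widetilde{\varphi})(x)$ agree on these sets. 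Hence $M(Z,\alpha,\varphi,\mathcal{U},N)$ and $R(Z,\alpha,\varphi,\mathcal{U},N)$ equal (or are squeezed between, after refinement) the corresponding Pesin--Pitskel quantities computed with $\widetilde{\mathcal{U}}$ on $\widetilde{X}$, up to an error controlled by $\gamma(\mathcal{U})\to 0$.

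The second step is to pass to the limit. On one side, $P_{Z,\widetilde f}^{PP}(\widetilde\varphi)$ is by its definition the limit over all open covers of $\widetilde{X}$ of diminishing diameter; on the other side, $P_{Z,f}(\varphi)$ is the limit over admissible covers of $X$ of diminishing diameter. Because the admissible covers of $X$ (with $\infty$ adjoined) form a diameter-cofinal family among open covers of $\widetilde{X}$, and because refining an open cover can only decrease the critical value inside the Pesin--Pitskel framework while the diameter control forces convergence, both limits exist (the $X$-side limit by Theorem~\ref{1}, the $\widetilde{X}$-side by the classical theory) and must coincide. The same argument applied verbatim to $\underline r$, $\overline r$ in place of $m$ yields the two capacity identities.

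The main obstacle I expect is the bookkeeping at the point $\infty$: one must check carefully that adjoining $\{\infty\}$ to an element of an admissible cover does not create ``extra'' covering strings that would lower the Pesin--Pitskel infimum below the value computed on $X$, and symmetrically that every open cover of $\widetilde{X}$ can be refined to something comparable to an admissible cover of $X$ without losing diameter control. Concretely, one needs the facts that $f^{-j}$ of a precompact set is precompact (properness), that the only preimage of $\infty$ under $\widetilde f$ is $\infty$ itself, and that for $Z\subset X$ any covering string of $\widetilde{X}$ that uses the $\infty$-block can be replaced by one that does not without increasing the weight $\exp(-\alpha m+\sup(S_m\widetilde\varphi))$ beyond a $\gamma(\mathcal{U})$-error. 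Once this matching of covering families is nailed down, the rest is the same limiting argument used in the proof of Theorem~\ref{1} and in \cite{Pesin}.
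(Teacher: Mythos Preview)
Your overall strategy is sound and is in the same spirit as the paper's proof, but the concrete constructions you use are different from the paper's, and one of your intermediate claims is false as stated.

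\textbf{Where you diverge from the paper.} The paper does \emph{not} pass from an admissible cover $\mathcal{U}=\{U_0,\dots,U_{k-1}\}$ of $X$ to a cover of $\widetilde{X}$ by adjoining $\{\infty\}$ to elements with noncompact closure. Instead it uses a metric fattening: $\widetilde{U}_i=\{y\in\widetilde{X}:\exists\,x\in U_i,\ \widetilde{d}(x,y)<|U_i|\}$. This automatically gives an open cover of $\widetilde{X}$ with $|\widetilde{\mathcal{U}}|\le 3|\mathcal{U}|$, and the oscillation bound $\gamma(\widetilde{\mathcal{U}})$ governs the weight discrepancy directly. For the converse direction, the paper does not ``intersect and refine'' a general open cover of $\widetilde{X}$; it takes a cover of $\widetilde{X}$ by $\widetilde d$-balls of radius $\varepsilon/2$, moves the centres into $X$ by density, obtains a cover of $X$ by $d$-balls of radius $\varepsilon$, and then invokes the \emph{admissibility of the metric} $d$ (Lemma~\ref{3}) to pick a nearby radius $\delta$ for which the ball cover is admissible. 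This sidesteps entirely the question of refining arbitrary open sets to precompact ones.

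\textbf{The false step.} Your assertion that ``strings using the $\infty$-element of $\widetilde{\mathcal{U}}$ contribute sets that can only help cover points of $\widetilde{X}\setminus X$'' is incorrect. If $W_0=U_0\cup\{\infty\}$ with $U_0=X\setminus K$ nonempty, then a string $\widetilde{\mathbf{U}}$ using $W_0$ gives $\widetilde{X}(\widetilde{\mathbf{U}})\cap X=X(\mathbf{U})$, which is typically a large set of points of $X$ and may well meet $Z$. The identity you need (and which you in fact state in the preceding clause) is that $X(\mathbf{U})=X\cap\widetilde{X}(\widetilde{\mathbf{U}})$ holds for \emph{every} string, regardless of whether $\infty$ appears in some $\widetilde{U}_{i_j}$; this is because $\widetilde f^{-1}(\{\infty\})=\{\infty\}$ and $\widetilde{U}_{i_j}\cap X=U_{i_j}$. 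With that correction, your weight comparison goes through: $\sup_{X(\mathbf{U})}(S_m\varphi)\le\sup_{\widetilde{X}(\widetilde{\mathbf{U}})}(S_m\widetilde{\varphi})\le\sup_{X(\mathbf{U})}(S_m\varphi)+m\gamma(\widetilde{\mathcal{U}})$, the upper bound coming from the fact that the only possible extra point is $\infty$, and $\widetilde{\varphi}(\infty)$ differs from $\varphi$ on the nearby $U_{i_j}$ by at most $\gamma$.

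\textbf{What each approach buys.} Your ``adjoin $\infty$'' construction gives the sharp identity $\widetilde{X}(\widetilde{\mathbf{U}})\cap X=X(\mathbf{U})$, so one inequality needs no $\gamma$-shift at all; but you then have to argue separately that $|\widetilde{\mathcal{U}}|\to0$ (which does hold, since an admissible $U_0$ of small $d$-diameter with compact complement necessarily lies in a small $\widetilde d$-ball about $\infty$) and your reverse passage from open covers of $\widetilde{X}$ to admissible covers of $X$ is only sketched. The paper's metric-fattening plus ball-cover argument is less sharp at each step (both directions carry a $\gamma$-shift) but makes the diameter control and the admissibility of the resulting cover transparent, and it avoids any case analysis at $\infty$.
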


\begin{proof}

Let $d$ be the metric given by the restriction to $X$ of some metric ~$\widetilde{d}$ on ~$\widetilde{X}$. By Lemma \ref{3}, we have that $d$ is an admissible metric. Let~$\mathcal{U}=\{U_0,U_1,\cdots,U_{k-1}\}$ be an admissible cover of ~$X.$ Let
$\widetilde{U}_i=\{y:\exists x\in U_i,~\widetilde{d}(x,y)<
| U_i|\},~0\leq i\leq k-1,$
then ~$\widetilde{\mathcal{U}}=\{\widetilde{U}_0,\widetilde{U}_1,\cdots,\widetilde{U}_{k-1}\}$
is an open cover of ~$\widetilde{X}$ and ~$|\mathcal{U}|\rightarrow 0$ implies ~$|\widetilde{\mathcal{U}}|\rightarrow0.$
Let ~$\mathcal{G}\subset\bigcup_{n\geq N}\mathcal{S}_n(\mathcal{U})$
covers a set ~$Z\subset X.$ For any ~$\mathbf{U}=(U_{i_0}U_{i_1}\cdots U_{i_{n-1}})\in\mathcal{G},$
define~$\widetilde{\mathbf{U}}=(\widetilde{U}_{i_0}\widetilde{U}_{i_1}\cdots
\widetilde{U}_{i_{n-1}})\in\mathcal{S}_{n}(\widetilde{\mathcal{U}})$,
and denote the collection of all these strings by ~$\widetilde{\mathcal{G}}$, then ~$\widetilde{\mathcal{G}}$ covers
~$Z\subset\widetilde{X}.$ Moreover, we have that
$$\widetilde{M}(Z,\alpha,\widetilde{\varphi},\widetilde{\mathcal{U}},N)
\leq M(Z,\alpha-\gamma,\varphi,\mathcal{U},N),$$
where~$\gamma=\gamma(\widetilde{\mathcal{U}})=\sup\{|\widetilde{\varphi}(x)-
\widetilde{\varphi}(y)|:x,y\in\widetilde{U},\widetilde{U}\in\widetilde{\mathcal{U}}\}.$
Let ~$N\rightarrow\infty$, then
$$\widetilde{m}(Z,\alpha,\widetilde{\varphi},\widetilde{\mathcal{U}})
\leq m(Z,\alpha-\gamma,\varphi,\mathcal{U}).$$
This implies that
$$P_{Z,\widetilde{f}}^{PP}(\widetilde{\varphi},\widetilde{\mathcal{U}})-\gamma
\leq P_{Z,f}(\varphi,\mathcal{U}).$$
Let ~$|\mathcal{U}|\rightarrow0,$ then ~$|\widetilde{\mathcal{U}}|\rightarrow0,$ ~$\gamma\rightarrow0,$ so we have
$$P_{Z,f}(\varphi)\geq P_{Z,\widetilde{f}}^{PP}(\widetilde{\varphi}).$$

We are going to show that~$P_{Z,f}(\varphi)\leq P_{Z,\widetilde{f}}^{PP}(\widetilde{\varphi}).$
If ~$\widetilde{\mathcal{U}}_{\frac{\varepsilon}{2}}=\{\widetilde{B}(\widetilde{x}_{0},\frac{\varepsilon}{2})$, $\cdots,\widetilde{B}(\widetilde{x}_{k-1},\frac{\varepsilon}{2})\}$
is a cover of ~$\widetilde{X},$ for every ~$\varepsilon\in(a,b),$ where ~$0<a<b.$ By the density of $X$ in  ~$\widetilde{X},$ it follows that there exist
~$\{x_{0},\cdots, x_{k-1}\}\subset X$, such that
~$\widetilde{d}(x_{i},\widetilde{x}_{i})<\frac{\varepsilon}{2},~0\leq i \leq k-1.$
If ~$x\in X$, we have that ~$\widetilde{d}(x,\widetilde{x}_{i})<\frac{\varepsilon}{2},$ for some
~$\widetilde{x}_{i}\in \{\widetilde{x}_{0},\cdots, \widetilde{x}_{k-1}\}.$ Hence it follows that
~$d(x,x_{i})\leq \widetilde{d}(x,\widetilde{x}_{i})+\widetilde{d}(x_{i},\widetilde{x}_{i})<\varepsilon,$
showing that ~$\{B(x_{0},\varepsilon),\cdots,B(x_{k-1},\varepsilon)\}$
is a cover of ~$X.$ Since ~$d$ is an admissible metric, there exists ~$\delta\in(a,b)$ such that
~$\mathcal{U}_{\delta}:=\{B(x_{0},$ $\delta),\cdots,B(x_{k-1},\delta)\}$
is an admissible cover of ~$X.$ For ~$a< \varepsilon< \delta < b$, we have that
$$M(Z,\alpha,\varphi,\mathcal{U}_{\delta},N)\leq \widetilde{M}(Z,\alpha-\gamma,
\widetilde{\varphi},\widetilde{\mathcal{U}}_{\frac{\varepsilon}{2}},N),$$
where ~$\gamma=\gamma(\mathcal{U}_\delta)=\sup\{\mid\varphi(x)-\varphi(y)\mid:
x,y\in B(x_{i},\delta),B(x_{i},\delta)\in\mathcal{U}_\delta\}.$
Let ~$N\rightarrow\infty$, we have
$$m(Z,\alpha,\varphi,\mathcal{U}_\delta )\leq \widetilde{m}(Z,\alpha-\gamma,
\widetilde{\varphi},\widetilde{\mathcal{U}}_{\frac{\varepsilon}{2}} ).$$
Moreover, $P_{Z,f}(\varphi,\mathcal{U}_\delta)-\gamma \leq
P_{Z,\widetilde{f}}^{PP}(\widetilde{\varphi},\widetilde{\mathcal{U}}_{\frac{\varepsilon}{2}}).$
Let $b\rightarrow 0,$ then $\varepsilon\rightarrow 0,$ $\delta\rightarrow 0$ and $\gamma\rightarrow 0.$
Then
$$P_{Z,f}(\varphi)\leq P_{Z,\widetilde{f}}^{PP}(\widetilde{\varphi}).$$
Hence ~$P_{Z,f}(\varphi)= P_{Z,\widetilde{f}}^{PP}(\widetilde{\varphi}).$ The others can be proved in a similar fashion.
\end{proof}

\begin{remark}
It is easy to see that this theorem is an extension of Theorem 4.3 in \cite{MC}.
\end{remark}

\begin{theorem}\label{5}~~
Let~$X$ be a locally compact separable metric space and $d$ the metric given by the restriction to $X$ of some metric ~$\widetilde{d}$ on ~$\widetilde{X}$, the one-point compactification of ~$X$. Let $f:X\rightarrow X$ be a proper map.
~$\varphi\in C(X,\mathbb{R})$ can be continuously extended to ~$\widetilde{X}$ denoted by ~$\widetilde{\varphi}$, then
$$P_{X,f}(\varphi)=P_{\widetilde{X},\widetilde{f}}^{PP}(\widetilde{\varphi}),$$
$$\underline{CP}_{X,f}(\varphi)=\underline{CP}_{\widetilde{X},\widetilde{f}}^{PP}(\widetilde{\varphi}),$$
$$\overline{CP}_{X,f}(\varphi)=\overline{CP}_{\widetilde{X},\widetilde{f}}^{PP}(\widetilde{\varphi}).$$

\end{theorem}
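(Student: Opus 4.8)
The plan is to reduce everything to Theorem \ref{4}. Taking $Z=X$ in that theorem gives immediately
\[
P_{X,f}(\varphi)=P^{PP}_{X,\widetilde f}(\widetilde\varphi),\qquad
\underline{CP}_{X,f}(\varphi)=\underline{CP}^{PP}_{X,\widetilde f}(\widetilde\varphi),\qquad
\overline{CP}_{X,f}(\varphi)=\overline{CP}^{PP}_{X,\widetilde f}(\widetilde\varphi),
\]
so it suffices to show that, for the proper map $\widetilde f$ on the compact metric space $\widetilde X$, replacing the subset $X$ by the whole space $\widetilde X=X\cup\{\infty\}$ changes none of the three Pesin--Pitskel quantities. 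The only point that is added is $\infty$, which is a fixed point of $\widetilde f$; moreover, since $\widetilde X$ is compact, the quantities of this paper for $(\widetilde X,\widetilde f,\widetilde\varphi)$ agree with the Pesin--Pitskel ones, so Theorem \ref{c} applies to $(\widetilde X,\widetilde f,\widetilde\varphi)$.

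For the topological pressure, monotonicity (Theorem \ref{c}(2)) gives $P^{PP}_{X,\widetilde f}(\widetilde\varphi)\le P^{PP}_{\widetilde X,\widetilde f}(\widetilde\varphi)$, while countable stability (Theorem \ref{c}(3)) applied to $\widetilde X=X\cup\{\infty\}$ gives
\[
P^{PP}_{\widetilde X,\widetilde f}(\widetilde\varphi)=\max\{\,P^{PP}_{X,\widetilde f}(\widetilde\varphi),\ P^{PP}_{\{\infty\},\widetilde f}(\widetilde\varphi)\,\}.
\]
Covering the fixed point $\{\infty\}$ by the single string $(\widetilde U,\dots,\widetilde U)$ with $\widetilde U\ni\infty$ and letting $|\widetilde{\mathcal U}|\to0$ shows $P^{PP}_{\{\infty\},\widetilde f}(\widetilde\varphi)=\widetilde\varphi(\infty)$. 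Hence the case of $P^{PP}$ comes down to the single inequality
\[
\widetilde\varphi(\infty)\le P^{PP}_{X,\widetilde f}(\widetilde\varphi),
\]
i.e. the fixed point at infinity may not strictly raise the pressure.

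To prove this I would exploit that $X$ is dense in $\widetilde X$. Given $\varepsilon>0$, the set $W=\{\widetilde x:\widetilde\varphi(\widetilde x)>\widetilde\varphi(\infty)-\varepsilon\}$ is an open neighbourhood of $\infty$, and since $\widetilde f(\infty)=\infty$ the sets $W_n=\bigcap_{j=0}^{n-1}\widetilde f^{-j}(W)$ are again open neighbourhoods of $\infty$; as $X$ is dense, each $W_n$ contains a point $x_n\in X$, and then $\widetilde f^{j}(x_n)\in W$ for $0\le j<n$, so $(S_n\widetilde\varphi)(x_n)\ge n(\widetilde\varphi(\infty)-\varepsilon)$. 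One would then like to conclude that, for every admissible cover $\mathcal U$ of small mesh and every $\alpha<\widetilde\varphi(\infty)-2\varepsilon$, any collection $\mathcal G$ covering $X$ (which must in particular cover the sets $W_N\cap X$) has $M(X,\alpha,\widetilde\varphi,\widetilde{\mathcal U},N)$ bounded below by $\exp(N(\widetilde\varphi(\infty)-2\varepsilon-\alpha))$, hence $m(X,\alpha,\widetilde\varphi,\widetilde{\mathcal U})=\infty$ and $P^{PP}_{X,\widetilde f}(\widetilde\varphi,\widetilde{\mathcal U})\ge\widetilde\varphi(\infty)-2\varepsilon$; letting $\varepsilon\to0$ and $|\mathcal U|\to0$ would then finish.

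I expect this last step to be the main obstacle, precisely because $P^{PP}$ is a dimension-type quantity whose coverings use strings of \emph{arbitrary} length $\ge N$: a single long orbit segment near $\infty$ is not enough, and one has to control the full cost of covering a neighbourhood of $\infty$ inside $X$ and rule out covering it cheaply by very long strings whose orbits eventually leave the region where $\widetilde\varphi\approx\widetilde\varphi(\infty)$. By contrast, the two capacity statements are routine: their coverings use strings of length \emph{exactly} $N$, so on one hand any covering of $\widetilde X$ is a covering of $X$ together with one extra string containing $\infty$ contributing at most $\exp(N(\widetilde\varphi(\infty)+o(1)))$ as the mesh shrinks, giving with monotonicity $\overline{CP}^{PP}_{\widetilde X,\widetilde f}(\widetilde\varphi)=\max\{\overline{CP}^{PP}_{X,\widetilde f}(\widetilde\varphi),\widetilde\varphi(\infty)\}$ (and the same for $\underline{CP}$), while on the other hand any covering of $X$ by length-$N$ strings must cover a point of $W_N\cap X$ and hence contains a string $\widetilde{\mathbf U}$ with $\sup_{y\in X(\widetilde{\mathbf U})}(S_N\widetilde\varphi)(y)\ge N(\widetilde\varphi(\infty)-\varepsilon)$, forcing $\overline{CP}^{PP}_{X,\widetilde f}(\widetilde\varphi)\ge\widetilde\varphi(\infty)$ and thus equality; the topological-pressure identity is the one that truly needs the argument of the previous paragraph.
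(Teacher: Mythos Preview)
Your route is genuinely different from the paper's. You pass through Theorem~\ref{4} with $Z=X$ and then try to compare $P^{PP}_{X,\widetilde f}$ with $P^{PP}_{\widetilde X,\widetilde f}$ via countable stability; the paper instead proves the inequality $P^{PP}_{\widetilde X,\widetilde f}(\widetilde\varphi)\le P_{X,f}(\varphi)$ directly at the level of covers. Given an admissible cover $\mathcal U=\{U_0,\dots,U_{k-1}\}$ of $X$, it \emph{fattens} each element to $\widetilde U_i=\{y:\exists\,z\in U_i,\ \widetilde d(z,y)<|U_i|\}$, obtaining an open cover $\widetilde{\mathcal U}$ of $\widetilde X$, and shows that for any string-collection $\mathcal G\subset\mathcal S(\mathcal U)$ covering $X$ the corresponding $\widetilde{\mathcal G}$ already covers $\widetilde X$: by density of $X$ and continuity of $\widetilde f$ at the fixed point $\infty$ there is $x\in X$ whose first iterates stay within $\varepsilon=\min_i|U_i|$ of $\infty$, and the fattened string through $x$ then contains $\infty$. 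This yields $\widetilde M(\widetilde X,\alpha,\widetilde\varphi,\widetilde{\mathcal U},N)\le M(X,\alpha-\gamma,\varphi,\mathcal U,N)$ at once, and the same inequality with $R$ in place of $M$ handles both capacity pressures simultaneously.

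Your capacity argument is fine precisely because all strings there have the same length $N$, so the point $x_N\in W_N\cap X$ is covered by a string of length $N$ and its weight is visibly $\ge\exp(N(\widetilde\varphi(\infty)-\varepsilon))$. The obstacle you flag for $P^{PP}$ is real in the form you wrote: with strings of variable length $\ge N$ you cannot match a pre-chosen $x_m$ to a covering string of length exactly $m$, so your lower bound on $M(X,\alpha,\dots,N)$ does not go through as stated. The paper's fattening device is exactly the missing ingredient: instead of bounding $M(X,\dots)$ from below by producing expensive strings, it shows that \emph{every} string-cover of $X$ is, after fattening, a string-cover of $\widetilde X$, so $M(X,\dots)$ dominates $\widetilde M(\widetilde X,\dots)$ directly and the separate inequality $\widetilde\varphi(\infty)\le P^{PP}_{X,\widetilde f}(\widetilde\varphi)$ is never needed. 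What you gain from your decomposition is a conceptually clean reduction (only one point is added, and it is a fixed point); what the paper's argument buys is a single estimate that treats $P$, $\underline{CP}$ and $\overline{CP}$ uniformly and avoids the variable-length difficulty altogether.
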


\begin{proof}
From Theorem \ref{4} and a property of Pesin-Pitskel pressure, we have
$$ P_{X,f}(\varphi)=P_{X,\widetilde{f}}^{PP}(\widetilde{\varphi})
\leq P_{\widetilde{X},\widetilde{f}}^{PP}(\widetilde{\varphi}). $$
On the other hand,
Let ~$\mathcal{U}=\{U_0,U_1,\cdots,U_{k-1}\}$ be an admissible cover of ~$X.$ Define
$\widetilde{U}_i=\{y:\exists x\in U_i,~\widetilde{d}(x,y)<
|U_i|\},~0\leq i\leq k-1,$
then ~$\widetilde{\mathcal{U}}=\{\widetilde{U}_0,\widetilde{U}_1,\cdots,\widetilde{U}_{k-1}\}$
is a cover of ~$\widetilde{X}$ and if ~$|\mathcal{U}|\rightarrow 0$ then ~$|\widetilde{\mathcal{U}}|\rightarrow0.$
Suppose ~$\mathcal{G}\subset\bigcup_{n\geq N}\mathcal{S}_n(\mathcal{U})$ cover ~$X$. For any ~$\mathbf{U}=(U_{i_0}U_{i_1}\cdots U_{i_{m-1}})\in\mathcal{G},$ define ~$\widetilde{\mathbf{U}}:=(\widetilde{U}_{i_0}\widetilde{U}_{i_1}\cdots
\widetilde{U}_{i_{m-1}})$. Denote the all this strings by ~$\widetilde{\mathcal{G}}$, i.e., $\widetilde{\mathcal{G}}=\{\widetilde{\mathbf{U}}:\mathbf{U}\in \mathcal{G} \}.$  Let ~$\varepsilon=\min_{0\leq i\leq k-1}|U_i|$,
then there exists ~$x\in X$, such that ~$\widetilde{d}(x,\infty)<\varepsilon,
~\widetilde{d}(\widetilde{f}(x),\widetilde{f}(\infty))<\varepsilon, \cdots,
~\widetilde{d}(\widetilde{f}^{m-1}(x),\widetilde{f}^{m-1}(\infty))<\varepsilon .$
Since ~$\mathcal{G}$ covers ~$X$, then there exists
~$\mathbf{U}_{0}=(U_{i_0}U_{i_1}\cdots U_{i_{m-1}})\in\mathcal{G}$, such that ~$x\in X(\mathbf{U}_{0}),$
Let ~$\widetilde{\mathbf{U}}_{0}=(\widetilde{U}_{i_0}\widetilde{U}_{i_1}\cdots
\widetilde{U}_{i_{m-1}})\in \widetilde{\mathcal{G}}$,
then ~$\infty\in \widetilde{X}(\widetilde{\mathbf{U}}_{0}).$ So ~$\widetilde{\mathcal{G}}$ covers
~$\widetilde{X}$. Hence
$$\widetilde{M}(\widetilde{X},\alpha,\widetilde{\varphi},\widetilde{\mathcal{U}},N)
\leq M(X,\alpha-\gamma,\varphi,\mathcal{U},N),$$
where ~$\gamma=\gamma(\widetilde{\mathcal{U}})=\sup\{|\widetilde{\varphi}(x)-
\widetilde{\varphi}(y)|:x,y\in\widetilde{U},\widetilde{U}\in\widetilde{\mathcal{U}}\}.$
Let ~$N\rightarrow\infty$, we have
$$\widetilde{m}(\widetilde{X},\alpha,\widetilde{\varphi},\widetilde{\mathcal{U}})
\leq m(X,\alpha-\gamma,\varphi,\mathcal{U}).$$
Moreover
$$P_{\widetilde{X},\widetilde{f}}^{PP}(\widetilde{\varphi},\widetilde{\mathcal{U}})-\gamma
\leq P_{X,f}(\varphi,\mathcal{U}).$$
If ~$|\mathcal{U}|\rightarrow0,$ then ~$|\widetilde{\mathcal{U}}|\rightarrow0$ and ~$\gamma\rightarrow0$ and hence
$$P_{\widetilde{X},\widetilde{f}}^{PP}(\widetilde{\varphi})\leq P_{X,f}(\varphi).$$
So
$$P_{\widetilde{X},\widetilde{f}}^{PP}(\widetilde{\varphi})= P_{X,f}(\varphi).$$
The others can be proved in a similar fashion.
\end{proof}

\begin{remark}
We can see that this theorem extends a result of Patr\~{a}o\cite{Patr}.
\end{remark}

\begin{corollary}~~
Let~$X$ be a locally compact separable metric space and $d$ the metric given by the restriction to $X$ of some metric ~$\widetilde{d}$ on ~$\widetilde{X}$, the one-point compactification of ~$X$. Let $f:X\rightarrow X$ be a proper map.
~$\varphi \in C(X,\mathbb{R})$ can be continuously extended to ~$\widetilde{X}$ denoted by ~$\widetilde{\varphi}$, then for any ~$n\in \mathbb{N},$ we have
$$P_{X,f^k}(S_k \varphi)=kP_{X,f}(\varphi).$$

\end{corollary}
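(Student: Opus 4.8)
The plan is to reduce everything to the corresponding fact about the Pesin--Pitskel pressure on the compactification $\widetilde{X}$, where the scaling identity $P^{PP}_{\widetilde{X},\widetilde{f}^{\,k}}(S_k\widetilde{\varphi}) = k\,P^{PP}_{\widetilde{X},\widetilde{f}}(\widetilde{\varphi})$ is a known property of the C-P structure (see \cite{Pesin,PP}). First I would observe that if $\varphi\in C(X,\mathbb{R})$ extends continuously to $\widetilde{\varphi}$ on $\widetilde{X}$, then so does $S_k\varphi$, since $S_k\widetilde{\varphi} = \sum_{j=0}^{k-1}\widetilde{\varphi}\circ\widetilde{f}^{\,j}$ is a finite sum of continuous functions on $\widetilde{X}$ and its restriction to $X$ is exactly $S_k\varphi$; thus $S_k\widetilde{\varphi}$ is precisely the continuous extension of $S_k\varphi$. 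Also $f^k$ is a proper map whenever $f$ is, and its extension to $\widetilde{X}$ is $(\widetilde{f})^k = \widetilde{f^k}$, so Theorem~\ref{5} applies to the system $(X,f^k)$ with the function $S_k\varphi$.

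The main chain of equalities is then: by Theorem~\ref{5} applied to $f^k$ and $S_k\varphi$,
\[
P_{X,f^k}(S_k\varphi) = P^{PP}_{\widetilde{X},\,\widetilde{f^k}}\bigl(\widetilde{S_k\varphi}\bigr) = P^{PP}_{\widetilde{X},\,(\widetilde{f})^k}\bigl(S_k\widetilde{\varphi}\bigr).
\]
Next, invoking the classical power rule for the Pesin--Pitskel topological pressure on the compact metric space $\widetilde{X}$,
\[
P^{PP}_{\widetilde{X},\,(\widetilde{f})^k}\bigl(S_k\widetilde{\varphi}\bigr) = k\,P^{PP}_{\widetilde{X},\,\widetilde{f}}(\widetilde{\varphi}).
\]
Finally, applying Theorem~\ref{5} once more, now to $f$ and $\varphi$, gives $P^{PP}_{\widetilde{X},\widetilde{f}}(\widetilde{\varphi}) = P_{X,f}(\varphi)$, and combining the three displays yields $P_{X,f^k}(S_k\varphi) = k\,P_{X,f}(\varphi)$.

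The one point that needs a little care — and which I expect to be the main (mild) obstacle — is verifying the compatibility of the compactification constructions: that the extension of $f^k$ coincides with the $k$-th iterate of the extension of $f$, and that the continuous extension of $S_k\varphi$ is $S_k\widetilde{\varphi}$ computed with respect to $\widetilde{f}$ rather than some other map. Both are immediate from the explicit definition of $\widetilde{f}$ given before Lemma~\ref{3} (it fixes $\infty$ and agrees with $f$ on $X$, so iteration commutes with the construction), but they must be stated so that Theorem~\ref{5} can legitimately be applied to the system $(X,f^k)$. Once that bookkeeping is in place, the result is simply the transport of the known power rule for pressure across the identifications supplied by Theorem~\ref{5}, and no further estimates are required. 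The statement is for $k\in\mathbb{N}$ (the corollary's ``$n$'' is a typo for ``$k$'').
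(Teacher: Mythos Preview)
Your proposal is correct and follows essentially the same route as the paper: apply Theorem~\ref{5} twice to transfer the problem to $\widetilde{X}$, invoke the classical power rule for pressure on the compact space $\widetilde{X}$, and conclude. The paper's proof is terser (it cites \cite{Walters} rather than \cite{Pesin,PP} for the power rule, which is fine since on the whole compact space $\widetilde{X}$ the Pesin--Pitskel pressure agrees with the Walters pressure), and it omits the compatibility checks $\widetilde{f^k}=(\widetilde{f})^k$ and $\widetilde{S_k\varphi}=S_k\widetilde{\varphi}$ that you rightly flag as needed bookkeeping.
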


\begin{proof}
For any ~$n\in \mathbb{N}$, by the Theorem \ref{5} we have
$$P_{X,f^k}(S_k \varphi)=P_{\widetilde{X},\widetilde{f}^k}^{PP}(S_k \widetilde{\varphi}),$$
$$P_{X,f}(\varphi)= P_{\widetilde{X},\widetilde{f}}^{PP}(\widetilde{\varphi}). $$
From \cite{Walters} we have that
$$P_{\widetilde{X},\widetilde{f}^k}^{PP}(S_k \widetilde{\varphi})
=kP_{\widetilde{X},\widetilde{f}}^{PP}(\widetilde{\varphi}). $$
So
$$P_{X,f^k}(S_k \varphi)=kP_{X,f}(\varphi).$$
\end{proof}

\begin{remark}
Under the conditions of Theorem \ref{5}, one can also extend the other properties of the classical topological pressure\cite{Walters}.
\end{remark}

\section{Some variational principles.}

In this section we present some variational principles involving the topological pressure.

\begin{theorem}(partial variational principle)\label{a}
Let~$X$ be a locally compact separable metric space and $d$ the metric given by the restriction to $X$ of some metric ~$\widetilde{d}$ on ~$\widetilde{X}$, the one-point compactification of ~$X$. Let $f:X\rightarrow X$ be a proper map.
~$\varphi\in C(X,\mathbb{R})$ can be continuously extended to ~$\widetilde{X}$ denoted by ~$\widetilde{\varphi}$, ~$M(X,f)\neq \emptyset$,
then
$$P_{X}(\varphi)\geq \sup\left\{h_\mu(f)+\int\varphi d\mu:\mu\in M(X,f)\right\}, $$
where $M(X,f)$ denotes the set of all $f-$invariant probability measures on $X$ and $h_{\mu}(f)$ is the measure-theoretic entropy of $f$ with respect to $\mu\in M(X,f).$
\end{theorem}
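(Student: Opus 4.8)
The plan is to transfer the problem to the one-point compactification and invoke the classical variational principle for the Pesin--Pitskel pressure there. First I would take an arbitrary $\mu\in M(X,f)$ and push it forward to $\widetilde X$: since $X$ embeds in $\widetilde X$ and $\mu$ is a Borel probability measure on $X$, the same set function defines a Borel probability measure $\widetilde\mu$ on $\widetilde X$ with $\widetilde\mu(\{\infty\})=0$. Because $\widetilde f$ restricts to $f$ on $X$ and fixes $\infty$, the measure $\widetilde\mu$ is $\widetilde f$-invariant, i.e.\ $\widetilde\mu\in M(\widetilde X,\widetilde f)$.

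Next I would compare the entropy and integral terms on the two spaces. Since $\widetilde\mu$ is carried by $X$, any finite Borel partition of $\widetilde X$ differs from one of $X$ only by adjoining a null set containing $\infty$, so $h_{\widetilde\mu}(\widetilde f)=h_\mu(f)$; similarly $\int_{\widetilde X}\widetilde\varphi\,d\widetilde\mu=\int_X\varphi\,d\mu$ because $\widetilde\varphi$ extends $\varphi$ and $\widetilde\mu$ gives no mass to $\infty$. Here I will want to be a little careful that the entropy over the compact system, which is defined via the supremum over finite measurable partitions, really is unchanged; the point is that a partition element containing $\infty$ has the same measure as its intersection with $X$, so all the relevant partition entropies and conditional entropies coincide.

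Then I would apply the classical variational principle on the compact metric space $\widetilde X$ for the continuous map $\widetilde f$ and the continuous potential $\widetilde\varphi$ (Walters, and Pesin--Pitskel for the C-P formulation): it gives
$$P_{\widetilde X,\widetilde f}^{PP}(\widetilde\varphi)\ \geq\ h_{\widetilde\mu}(\widetilde f)+\int_{\widetilde X}\widetilde\varphi\,d\widetilde\mu\ =\ h_\mu(f)+\int_X\varphi\,d\mu.$$
Finally, Theorem \ref{5} identifies $P_{X,f}(\varphi)=P_{\widetilde X,\widetilde f}^{PP}(\widetilde\varphi)$, so $P_X(\varphi)\geq h_\mu(f)+\int\varphi\,d\mu$, and taking the supremum over $\mu\in M(X,f)$ finishes the proof.

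The main obstacle is the step identifying the measure-theoretic entropies, $h_{\widetilde\mu}(\widetilde f)=h_\mu(f)$: one must justify that extending a Borel probability measure from $X$ to its one-point compactification and then computing entropy over the (now compact) space does not introduce any extra entropy from the added fixed point, and that every finite partition of $\widetilde X$ can be replaced, without changing its $\widetilde\mu$-entropy, by one adapted to the subspace $X$. Everything else is either a direct push-forward computation or a citation (the classical variational principle on the compact space $\widetilde X$ and Theorem \ref{5}).
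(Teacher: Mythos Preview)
Your proposal is correct and follows essentially the same approach as the paper: push $\mu$ forward to $\widetilde X$, note that $\widetilde\mu(\{\infty\})=0$ so that $h_{\widetilde\mu}(\widetilde f)=h_\mu(f)$ and $\int_{\widetilde X}\widetilde\varphi\,d\widetilde\mu=\int_X\varphi\,d\mu$, then combine Theorem~\ref{5} with the classical variational principle on the compact space $\widetilde X$. The only cosmetic difference is that the paper writes the classical variational principle as an equality (a supremum over all $\widetilde\mu\in M(\widetilde X,\widetilde f)$) before restricting to those coming from $M(X,f)$, whereas you invoke only the inequality direction for a single $\widetilde\mu$ and then take the supremum---the logic is the same.
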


\begin{proof}
If $\mu\in M(X,f)$, define $\widetilde{\mu}(\widetilde{A})=\mu(\widetilde{A}\cap X).$ It is immediate that $\widetilde{\mu}\in M(\widetilde{X},\widetilde{f}),$ since $X$ and $\{\infty\}$ are $\widetilde{f}-$invariant sets. It is also immediate that $\widetilde{\mu}(\{\infty\})=0,$
$\int\widetilde{\varphi}~d\widetilde{\mu}=\int\varphi~d\mu$ and $h_{\mu}(f)=h_{\tilde{\mu}}(\widetilde{f}).$
By Theorem \ref{5} and the classical variational principle \cite{Walters}, we have that
\begin{eqnarray*}
&&P_{X,f}(\varphi)= P_{\widetilde{X},\widetilde{f}}^{PP}(\widetilde{\varphi}) \\
&=&\sup\left\{h_{\widetilde{\mu}}(\widetilde{f})+\int_{\widetilde{X}}\widetilde{\varphi}~d\widetilde{\mu}:
\widetilde{\mu} \in M(\widetilde{X},\widetilde{f})\right\} \\
&\geq&\sup\left\{h_\mu(f)+\int_{X}\varphi~d\mu:\mu\in M(X,f)\right\}.
\end{eqnarray*}
\end{proof}

\begin{remark}
Under the conditions of Theorem \ref{a}. If $\varphi=0,$ Patr\~{a}o\cite{Patr} proved that
\begin{equation}\label{b}
\sup\left\{h_\mu(f):\mu\in M(X,f)\right\}=\sup\left\{h_{\widetilde{\mu}}(\widetilde{f}):
\widetilde{\mu} \in M(\widetilde{X},\widetilde{f})\right\}
\end{equation}
and $h(f)=h(\widetilde{f}),$ where $h(f)$ denotes the Patr\~{a}o topological entropy\cite{Patr} and $h(\widetilde{f})$ denotes the classical topological entropy\cite{AKM}. By the classical variational principle\cite{Walters}, we have that $$h(\widetilde{f})= \sup\left\{h_{\widetilde{\mu}}(\widetilde{f}):
\widetilde{\mu} \in M(\widetilde{X},\widetilde{f})\right\}.$$
Combining (\ref{b}) we have that $$P_{X,f}(0)=P_{\widetilde{X},\widetilde{f}}^{PP}(0)=h(\widetilde{f})=h(f)=\sup\left\{h_\mu(f):\mu\in M(X,f)\right\},$$
i.e., if $\varphi=0,$ then the equality holds in Theorem\ref{a}.
We can construct an example which says that the inequality holds in Theorem\ref{a}.
Let $X=\mathbb{R},$ $f:X\rightarrow X$ be a homeomorphism, defining $\varphi:X\rightarrow\mathbb{R}$ by
$$\varphi(x)=\left\{\begin{array}{ll}
\text{arccot}(x),& x< 0\\
\text{arccot}(-x),& x\geq 0,
\end{array}
\right.
$$
then $\widetilde{\varphi}:\widetilde{X}\rightarrow \mathbb{R},$
$$\widetilde{\varphi}(x)=\left\{\begin{array}{ll}
\varphi(x),& x\in X\\
\pi,& x= \infty.
\end{array}
\right.
$$
The graph of $\varphi$ is in Figure 1. $\widetilde{X}$ and the unit circle are homeomorphic.
Since $h(f)=\sup\left\{h_\mu(f):\mu\in M(X,f)\right\}=\sup\left\{h_{\widetilde{\mu}}(\widetilde{f}):
\widetilde{\mu} \in M(\widetilde{X},\widetilde{f})\right\}=h(\widetilde{f})=0,$ then $h_\mu(f)=h_{\widetilde{\mu}}(\widetilde{f})=0,\forall \mu\in M(X,f),\forall \widetilde{\mu}\in M(\widetilde{X},\widetilde{f}).$
Let $\widetilde{\mu}_{1}\in M(\widetilde{X},\widetilde{f})$ such that $\widetilde{\mu}_{1}(\{\infty\})=1.$
Then $\int_{\widetilde{X}}\widetilde{\varphi}~d\widetilde{\mu}_{1}=\pi.$ Moreover,
$$\sup\left\{h_{\widetilde{\mu}}(\widetilde{f})+\int_{\widetilde{X}}\widetilde{\varphi}~d\widetilde{\mu}:
\widetilde{\mu} \in M(\widetilde{X},\widetilde{f})\right\}\geq \int_{\widetilde{X}}\widetilde{\varphi}~d\widetilde{\mu}_{1}=\pi.$$
On the other hand,
$$\sup\left\{h_\mu(f)+\int_{X}\varphi~d\mu:\mu\in M(X,f)\right\}=\sup\left\{\int_{X}\varphi~d\mu:\mu\in M(X,f)\right\}< \pi.$$

$$
\includegraphics[height=8cm]{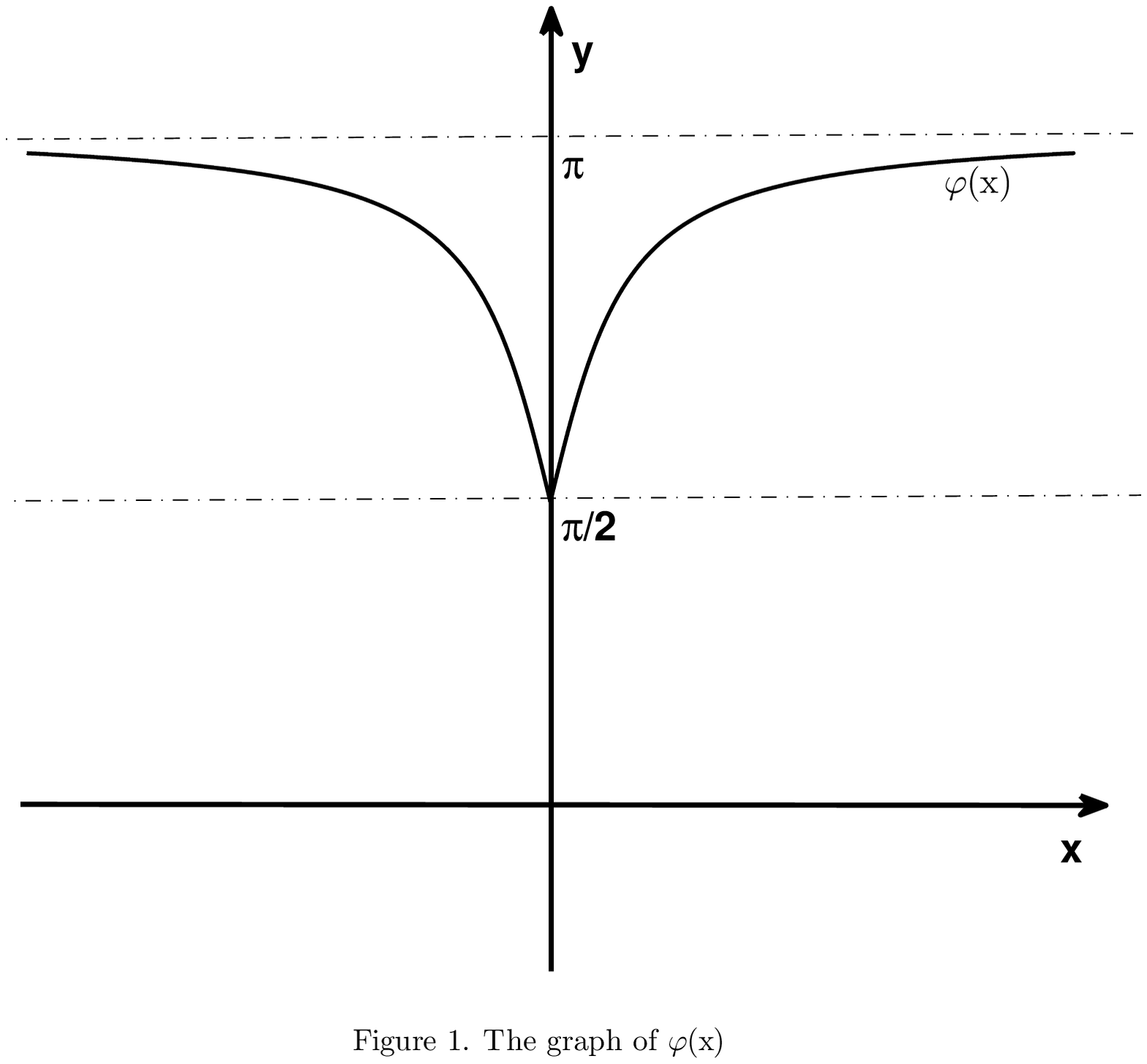}
$$

\end{remark}

\begin{theorem}(Inverse variational principle)
Let~$X$ be a locally compact separable metric space and $d$ the metric given by the restriction to $X$ of some metric ~$\widetilde{d}$ on ~$\widetilde{X}$, the one-point compactification of ~$X$. Let $f:X\rightarrow X$ be a proper map.
~$\varphi\in C(X,\mathbb{R})$ can be continuously extended to ~$\widetilde{X}$ denoted by ~$\widetilde{\varphi}$, ~$M(X,f)\neq \emptyset$,
then for any ~$\mu\in M(X,f)$, we have that
$$h_\mu(f)+\int\varphi d\mu=\inf\{P_{Z,f}(\varphi):Z\subset X,\mu(Z)=1 \}.$$
\end{theorem}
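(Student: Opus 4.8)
The plan is to transport the statement to the one-point compactification $\widetilde X$, where it becomes the inverse variational principle for a continuous map of a compact metric space, proved by Takens and Verbitski \cite{TV} (see also \cite{Pesin}). Fix $\mu\in M(X,f)$ and let $\widetilde\mu$ be its extension to $\widetilde X$ used in the proof of Theorem \ref{a}, so that $\widetilde\mu\in M(\widetilde X,\widetilde f)$, $\widetilde\mu(\{\infty\})=0$, $h_\mu(f)=h_{\widetilde\mu}(\widetilde f)$ and $\int_X\varphi\,d\mu=\int_{\widetilde X}\widetilde\varphi\,d\widetilde\mu$. Note that the hypothesis that $\varphi$ extends continuously to the compact space $\widetilde X$ forces $\varphi$ to be bounded, so Theorem \ref{4} applies to $\varphi$.

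First I would match the two families of full-measure sets. If $Z\subset X$ with $\mu(Z)=1$, then $Z$, viewed in $\widetilde X$, satisfies $\widetilde\mu(Z)=1$, and by Theorem \ref{4}, $P_{Z,f}(\varphi)=P_{Z,\widetilde f}^{PP}(\widetilde\varphi)$; this already gives
$$\inf\{P_{Z,f}(\varphi):Z\subset X,\ \mu(Z)=1\}\ \ge\ \inf\{P_{\widetilde Z,\widetilde f}^{PP}(\widetilde\varphi):\widetilde Z\subset\widetilde X,\ \widetilde\mu(\widetilde Z)=1\}.$$
Conversely, given $\widetilde Z\subset\widetilde X$ with $\widetilde\mu(\widetilde Z)=1$, the set $Z:=\widetilde Z\cap X$ satisfies $\widetilde\mu(Z)=1$ since $\widetilde\mu(\{\infty\})=0$, hence $\mu(Z)=1$; by the monotonicity of the Pesin-Pitskel pressure in the set argument together with Theorem \ref{4} one gets $P_{Z,f}(\varphi)=P_{Z,\widetilde f}^{PP}(\widetilde\varphi)\le P_{\widetilde Z,\widetilde f}^{PP}(\widetilde\varphi)$, which yields the reverse inequality and hence
$$\inf\{P_{Z,f}(\varphi):Z\subset X,\ \mu(Z)=1\}=\inf\{P_{\widetilde Z,\widetilde f}^{PP}(\widetilde\varphi):\widetilde Z\subset\widetilde X,\ \widetilde\mu(\widetilde Z)=1\}.$$

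Next I would invoke the inverse variational principle on the compact system $(\widetilde X,\widetilde f)$ with potential $\widetilde\varphi\in C(\widetilde X,\mathbb{R})$ and invariant measure $\widetilde\mu$ \cite{TV}: the right-hand infimum equals $h_{\widetilde\mu}(\widetilde f)+\int_{\widetilde X}\widetilde\varphi\,d\widetilde\mu$. Substituting $h_{\widetilde\mu}(\widetilde f)=h_\mu(f)$ and $\int_{\widetilde X}\widetilde\varphi\,d\widetilde\mu=\int_X\varphi\,d\mu$ completes the proof.

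I expect the only real work to sit inside the cited compact statement, not in the reduction: its $\le$ half asks, for every $\varepsilon>0$, for a set $\widetilde Z$ of full $\widetilde\mu$-measure with $P_{\widetilde Z,\widetilde f}^{PP}(\widetilde\varphi)\le h_{\widetilde\mu}(\widetilde f)+\int\widetilde\varphi\,d\widetilde\mu+\varepsilon$, obtained by a generic-points / entropy-distribution construction, whereas the $\ge$ half is just the partial variational principle (Theorem \ref{a}) applied to the restriction of $\widetilde f$ to a full-measure invariant set. The genuinely new point in the non-compact case is merely the bookkeeping observation that $\widetilde\mu$ charges no mass at the point at infinity, so that passing between subsets of $X$ and subsets of $\widetilde X$ preserves both the full-measure condition and, via Theorem \ref{4} and monotonicity, the relevant pressure values.
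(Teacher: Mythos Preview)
Your proposal is correct and follows essentially the same route as the paper: extend $\mu$ to $\widetilde\mu$ on $\widetilde X$, invoke Pesin's inverse variational principle on the compact system $(\widetilde X,\widetilde f,\widetilde\varphi)$, and use Theorem~\ref{4} together with $\widetilde\mu(\{\infty\})=0$ to match the two infima. Your use of monotonicity to pass from $\widetilde Z$ to $\widetilde Z\cap X$ is in fact slightly more explicit than the paper's own argument, which simply asserts the corresponding equality of infima.
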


\begin{proof}
If ~$\mu\in M(X,f)$, defining ~$\widetilde{\mu}(\widetilde{A})=\mu(\widetilde{A}\cap X)$,
then ~$\widetilde{\mu}\in M(\widetilde{X},\widetilde{f})$ and ~$h_\mu(f)=h_{\widetilde{\mu}}(\widetilde{f}).$
If $Z\subset X$ and $\mu(Z)=1,$ then $\widetilde{\mu}(Z)=1$. If $\widetilde{Z}\subset \widetilde{X},\infty\in \widetilde{Z}$ and $\widetilde{\mu}(\widetilde{Z})=1$, then $\mu(\widetilde{Z}\cap X)=1$ and $\widetilde{\mu}(\{\infty\})=0. $
Pesin\cite{Pesin} gave that $$h_{\widetilde{\mu}}(\widetilde{f})+\int_{\widetilde{X}}\widetilde{\varphi}d\widetilde{\mu}
=\inf\{P_{\widetilde{Z},\widetilde{f}}^{PP}(\widetilde{\varphi}):\widetilde{Z}\subset\widetilde{X},
\widetilde{\mu}(\widetilde{Z})=1  \}.$$
Applying Theorem \ref{4} and the Pesin's result, we have that
\begin{eqnarray*}
h_\mu(f)+\int\varphi d\mu
&=&h_{\widetilde{\mu}}(\widetilde{f})+\int_{\widetilde{X}}\widetilde{\varphi}d\widetilde{\mu}\\
&=&\inf\{P_{\widetilde{Z},\widetilde{f}}^{PP}(\widetilde{\varphi}):\widetilde{Z}\subset\widetilde{X},
\widetilde{\mu}(\widetilde{Z})=1  \}\\
&=&\inf\{P_{Z,\widetilde{f}}^{PP}(\widetilde{\varphi}):Z\subset X, \mu(Z)=1 \}\\
&=&\inf\{P_{Z,f}(\varphi):Z\subset X,\mu(Z)=1 \}.
\end{eqnarray*}
\end{proof}

\begin{remark}
It is easy to see that this theorem extends a result of Pesin\cite{Pesin}.
\end{remark}

\section{Multifractal analysis of local entropies for expansive homeomorphism with specification.}

In this section, as some applications of the topological pressure of a proper map for a locally compact separable metric space, we give some results of multifractal analysis of local entropies for expansive homeomorphisms with specification. These results extend the results of Takens and Verbitski \cite{TV}.

Let $(X,d)$ be a metric space and $f:X\rightarrow X$ a proper map, we say that $f$ with the specification property if for any $\varepsilon>0$ there exists an integer $m=m(\varepsilon)$ such that for arbitrary finite intervals $I_{j}=[a_{j},b_{j}]\subset \mathbb{N}, j=1,\cdots,k,$ such that
$$\text{dist}(I_{i},I_{j})\geq m(\varepsilon),i\neq j,$$
and any $x_{1},\cdots,x_{k}$ in $X$ there exists a point $x\in X$ such that
$$d(f^{p+a_{j}}(x),f^{p}(x_{j}))<\varepsilon$$
for all $p=0,\cdots,b_{j}-a_{j}$ and every $j=1,\cdots,k.$

\begin{lemma}(\cite{MC})\label{9}
Let $X$ be a locally compact separable metric space, $f: X\rightarrow X$ be a proper map with the specification property respect to the metric that is the restriction of some metric on $\widetilde{X}$. Then $\widetilde{f}$ satisfies the specification property too, where $\widetilde{f}$ is the extension of $f$.
\end{lemma}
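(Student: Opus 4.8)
The plan is to transfer the specification property from $f$ on $X$ to $\widetilde f$ on $\widetilde X$ by a direct $\varepsilon$-$\delta$ argument, handling the point at infinity separately by exploiting that $\widetilde f$ fixes $\infty$ and that $X$ is dense in $\widetilde X$. First I would fix $\varepsilon>0$ and, using that $\widetilde d$ restricted to $X$ induces a metric comparable to (indeed, equal to) the ambient one on bounded pieces, produce from the specification constant $m=m(\varepsilon')$ of $f$ (for a suitable $\varepsilon'<\varepsilon$) a candidate constant $\widetilde m$ for $\widetilde f$. The key reduction is: given finite intervals $I_j=[a_j,b_j]$ with pairwise distance $\ge \widetilde m$ and points $\widetilde x_1,\dots,\widetilde x_k\in\widetilde X$, I want a single $\widetilde x\in\widetilde X$ that $\varepsilon$-shadows the orbit of $\widetilde x_j$ along $I_j$ for each $j$.

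The main case split is according to which of the $\widetilde x_j$ equal $\infty$. If none of them is $\infty$, all the $\widetilde x_j$ lie in $X$, and since $X$ is dense and $\widetilde f|_X=f$, I would first replace each $\widetilde x_j$ by a nearby point of $X$ whose finite orbit segment $f^p(\widetilde x_j)$, $p=0,\dots,b_j-a_j$, stays within $\varepsilon/3$ of the original segment (possible by continuity of $f,\dots,f^{b_j-a_j}$ on the compact orbit closure), then apply the specification of $f$ with tolerance $\varepsilon/3$ to obtain $x\in X\subset\widetilde X$ shadowing within $\varepsilon$; here one must check distances in $\widetilde d$, which is fine since all the points involved stay in a compact subset of $X$ where $\widetilde d$ and $d$ agree. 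If some $\widetilde x_j=\infty$, note $\widetilde f^p(\infty)=\infty$ for all $p$, so the required shadowing along $I_j$ demands $\widetilde d(\widetilde f^{p+a_j}(\widetilde x),\infty)<\varepsilon$, i.e.\ that the shadowing orbit be ``near infinity'' on that time window; since near infinity means outside a large compact set, and $f$ is proper, I would choose the $X$-approximants $x_j$ (for the $\infty$-indices) to be points of $X$ with $\widetilde d(x_j,\infty)$ small and whose forward orbit segments also stay near $\infty$ — such points exist because $\widetilde f$ is continuous at $\infty$ and $X$ is dense, so $\infty$ has arbitrarily small $\widetilde f$-invariant-enough neighbourhoods meeting $X$. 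Then again apply specification of $f$ to the resulting family $x_1,\dots,x_k\in X$.

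Concretely I would organize the proof as follows: (i) fix $\varepsilon$, set $\varepsilon_1=\varepsilon/3$ and invoke $m(\varepsilon_1)$ from the specification of $f$; (ii) for each $j$, using continuity of the finitely many iterates $\widetilde f^0,\dots,\widetilde f^{b_j-a_j}$ at $\widetilde x_j$, pick $x_j\in X$ with $\widetilde d(\widetilde f^p(x_j),\widetilde f^p(\widetilde x_j))<\varepsilon_1$ for $0\le p\le b_j-a_j$ (when $\widetilde x_j=\infty$ this says the orbit segment of $x_j$ stays $\varepsilon_1$-close to $\infty$); (iii) apply the specification of $f$ to $x_1,\dots,x_k$ and the same intervals $I_j$ (legitimate since $\mathrm{dist}(I_i,I_j)\ge \widetilde m\ge m(\varepsilon_1)$ once we set $\widetilde m:=m(\varepsilon_1)$) to get $x\in X$ with $d(f^{p+a_j}(x),f^p(x_j))<\varepsilon_1$; (iv) combine via the triangle inequality in $\widetilde d$, checking that on the relevant orbit segments $d$ and $\widetilde d$ control each other, to conclude $\widetilde d(\widetilde f^{p+a_j}(x),\widetilde f^p(\widetilde x_j))<\varepsilon$, so $\widetilde x:=x$ works.

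The main obstacle I expect is step (ii) in the case $\widetilde x_j=\infty$: producing points of $X$ whose entire orbit segment of a prescribed finite length remains near $\infty$. This requires knowing that $\infty$ admits a neighbourhood basis $\{W_\ell\}$ in $\widetilde X$ with $\widetilde f^p(W_\ell)$ still small for $p$ up to the required length — which follows from continuity of the iterates $\widetilde f^p$ at the fixed point $\infty$ — intersected with the dense set $X$; one should be slightly careful that the length $b_j-a_j$ is not uniformly bounded, so the neighbourhood must be chosen after the intervals are given, which is legitimate since specification lets the shadowing point depend on the full data $(I_j,x_j)$. With that in hand the rest is the routine triangle-inequality bookkeeping, and I would note (as in \cite{MC}) that one should also remark the metrics $d$ and $\widetilde d|_X$ are uniformly equivalent on compacta, which is all that is used.
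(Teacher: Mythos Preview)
The paper does not prove this lemma; it simply cites \cite{MC}. So there is no in-paper argument to compare against, and your task reduces to whether the outlined proof stands on its own.

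Your strategy is sound and essentially the natural one: fix $\varepsilon$, take $\widetilde m=m(\varepsilon/3)$ from the specification of $f$, approximate each prescribed point $\widetilde x_j\in\widetilde X$ by some $x_j\in X$ whose finite orbit segment stays $\varepsilon/3$-close to that of $\widetilde x_j$, run specification of $f$ on the $x_j$, and close with the triangle inequality. The delicate case $\widetilde x_j=\infty$ is handled exactly as you say, via continuity of the finitely many iterates $\widetilde f^{0},\dots,\widetilde f^{\,b_j-a_j}$ at the fixed point $\infty$ together with density of $X$; your observation that the approximating $x_j$ may (and must) depend on the interval lengths is the right one, and it is indeed legitimate because in the specification property only $m(\varepsilon)$ is fixed in advance.

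Two minor clean-ups. First, by hypothesis $d$ \emph{is} $\widetilde d|_X$, so all your remarks about ``$d$ and $\widetilde d$ controlling each other'' or being ``uniformly equivalent on compacta'' are vacuous---just drop them and work throughout in $\widetilde d$. Second, in the case $\widetilde x_j\in X$ you do not need any approximation at all: take $x_j=\widetilde x_j$. With these simplifications step (iv) becomes the single line
\[
\widetilde d\bigl(\widetilde f^{\,p+a_j}(x),\widetilde f^{\,p}(\widetilde x_j)\bigr)\le d\bigl(f^{\,p+a_j}(x),f^{\,p}(x_j)\bigr)+\widetilde d\bigl(\widetilde f^{\,p}(x_j),\widetilde f^{\,p}(\widetilde x_j)\bigr)<\tfrac{\varepsilon}{3}+\tfrac{\varepsilon}{3}<\varepsilon,
\]
and the proof is complete.
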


Let $X$ be a metric space and $f:X\rightarrow X$ a homeomorphism. An admissible cover $\alpha$ of $X$ is called a generator for $f$ if for every bisequence $\{A_{n}\}_{-\infty}^{\infty}$ of members of $\alpha$ the set $\bigcap_{n=-\infty}^{\infty}f^{-n}\bar{A}_{n}$ contains at most one point of $X.$ If this condition is replaced by $\bigcap_{n=-\infty}^{\infty}f^{-n}A_{n}$ contains at most one point of $X$ then $\alpha$ is called a weak generator.

\begin{lemma}\label{6}
If $f: X\rightarrow X$ is a homeomorphism of a locally compact separable metric space, then $f$ has a generator iff $f$ has a weak generator.
\end{lemma}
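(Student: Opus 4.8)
The statement parallels the classical fact (Walters, \emph{An Introduction to Ergodic Theory}, Theorem 5.18) that for a homeomorphism of a compact metric space the existence of a generator is equivalent to the existence of a weak generator. The plan is to transfer the problem to the one-point compactification $\widetilde{X}$, where the classical result applies, and then check that generators pass back and forth between $f$ and $\widetilde{f}$. First I would show: if $\alpha$ is an admissible cover of $X$, then the cover $\widetilde{\alpha}$ of $\widetilde{X}$ obtained by thickening each $U\in\alpha$ (exactly as in the proof of Theorem \ref{4}: $\widetilde{U}=\{y\in\widetilde{X}:\exists x\in U,\ \widetilde{d}(x,y)<|U|\}$) together with an extra small ball around $\infty$ is an open cover of $\widetilde{X}$; conversely, an open cover of $\widetilde{X}$ restricts to an admissible cover of $X$, since every open set of $\widetilde{X}$ either lies in $X$ (relatively compact closure) or contains $\infty$ (complement compact in $X$), which is precisely the admissibility condition.

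The core of the argument is then a purely set-theoretic comparison of the two intersection conditions. Suppose $f$ has a weak generator; I want to produce a generator. Working in $\widetilde{X}$: take the weak generator $\alpha$, thicken to $\widetilde{\alpha}$ as above, and argue that $\widetilde{\alpha}$ is a weak generator for $\widetilde{f}$ (for any bisequence of members of $\widetilde{\alpha}$, the intersection $\bigcap_n \widetilde{f}^{-n}\widetilde{A}_n$ either equals $\{\infty\}$, which is harmless, or, if it contains a point $x\in X$, then $f^n(x)$ lies in the $X$-part of $\widetilde{A}_n$, hence in a slightly larger element of $\alpha$, forcing at most one such point). Then apply the classical Walters equivalence on the compact space $\widetilde{X}$ to get a genuine generator $\widetilde{\beta}$ for $\widetilde{f}$. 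Finally, restrict $\widetilde{\beta}$ to $X$: each $\widetilde{B}\cap X$ is admissible by the remark above, the restricted family is a finite open cover of $X$, and the closure condition $\bigcap_n f^{-n}\overline{\widetilde{B}_n\cap X}^{X}\subset\bigcap_n \widetilde{f}^{-n}\overline{\widetilde{B}_n}^{\widetilde{X}}$ shows it is a generator for $f$. The reverse direction (a generator for $f$ yields a weak generator for $f$) is the trivial implication, since a generator is \emph{a fortiori} a weak generator, so that half needs no work beyond noting it.

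The step I expect to be the main obstacle is the bookkeeping with closures: passing from $f$ to $\widetilde{f}$ one must be careful that $\overline{A}$ taken in $X$ and $\overline{\widetilde{A}}$ taken in $\widetilde{X}$ differ only possibly by the point $\infty$, and that adding or deleting the single point $\infty$ from all the sets in an intersection $\bigcap_n \widetilde{f}^{-n}(\cdot)$ changes the intersection by at most the single fixed point $\infty$ (because $\widetilde{f}(\infty)=\infty$). Since one point never violates an ``at most one point'' condition — unless the intersection already had a point of $X$, in which case $\infty$ would make it two — one has to phrase the argument so that the $\infty$-contribution is absorbed cleanly; the cleanest way is to observe that any $\widetilde{f}$-orbit meeting $X$ stays in $X$ and any orbit through $\infty$ is constant, so the two cases never interact. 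A secondary, more routine point is verifying that thickening an admissible cover yields an \emph{open} cover of all of $\widetilde{X}$ including a neighborhood of $\infty$, which is handled exactly as in Theorem \ref{5} by choosing $\varepsilon=\min_i|U_i|$ and noting density of $X$ in $\widetilde{X}$.
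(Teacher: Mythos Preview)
Your compactification strategy is more elaborate than what the paper does, and it contains a real gap. The paper's proof is essentially the classical Walters argument transplanted verbatim: given a weak generator $\beta$, use that admissible covers have Lebesgue numbers (Lemma~\ref{2}) and that admissible covers of arbitrarily small diameter exist (Lemma~\ref{3}) to choose an admissible cover $\alpha$ whose diameter is less than a Lebesgue number $\delta$ of $\beta$. Then every $\bar{A}_{i_n}$ with $A_{i_n}\in\alpha$ sits inside some $B_{j_n}\in\beta$, so $\bigcap_n f^{-n}\bar{A}_{i_n}\subset\bigcap_n f^{-n}B_{j_n}$ has at most one point. No passage to $\widetilde{X}$ is used at all.

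The gap in your proposal is the thickening step. You enlarge each $U\in\alpha$ to $\widetilde{U}\supsetneq U$, and then for a bisequence $\{\widetilde{A}_n\}$ you observe that if $x\in X$ lies in $\bigcap_n\widetilde{f}^{-n}\widetilde{A}_n$ then $f^n(x)$ lies in the ``slightly larger element'' $\widetilde{A}_n\cap X$. But the weak generator property of $\alpha$ tells you nothing about intersections of preimages of \emph{enlarged} sets; enlarging makes the at-most-one-point condition \emph{harder}, not easier, to satisfy. So ``forcing at most one such point'' does not follow. The fix is not to thicken: simply adjoin a small ball $\widetilde{B}(\infty,\delta/2)$ to $\alpha$ itself (elements of $\alpha$ are already open in $\widetilde{X}$), and use the Lebesgue number of $\alpha$ to absorb that ball back into elements of $\alpha$---this is exactly what the paper does later in Lemma~\ref{8}. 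Once you make that correction the detour through $\widetilde{X}$ works, but it is strictly longer than the direct two-line refinement argument the paper gives.
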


\begin{proof}
A generator is clearly a weak generator. Now suppose $\beta$ is a weak generator for $f,$ $\beta=\{B_1,\cdots,B_s\}.$
Suppose $d$ is the restriction of some metric $\widetilde{d}$ on $\widetilde{X},$ the one-point compactification of $X.$ By the Lemma \ref{2}, $\beta$ has a Lebesgue number. denoted by $\delta.$ By the Lemma \ref{3}, there exists an admissible cover of $X,$ denoted by $\alpha$ such that the diameter of $\alpha$ is less than $\delta.$ So if $\{A_{n}\}_{-\infty}^{\infty}$ is a bisequence in
$\alpha$ then for any $n$ there exists $j_{n}$ with $\bar{A}_{i_{n}}\subseteq B_{j_{n}}.$ Hence
$$\bigcap_{n=-\infty}^{\infty}f^{-n}\bar{A}_{i_{n}}\subseteq \bigcap_{n=-\infty}^{\infty}f^{-n}A_{j_{n}},$$
which is either empty or a single point. So $\alpha$ is a generator.
\end{proof}

Let $(X,d)$ be a metric space and $f:X\rightarrow X$ a homeomorphism,
we say that $f$ is expansive if there exists ~$\delta>0$ with the property that if ~$x\neq y$ then there exists ~$n\in\mathbb{Z}$ with ~$d(f^{n}(x),~f^{n}(y))>\delta$. We call ~$\delta$ an expansive constant for ~$f.$

\begin{lemma}\label{7}
Let $f: X\rightarrow X$ be a homeomorphism of a locally compact separable metric space $(X,d)$ and $d$ the restriction of some metric $\widetilde{d}$ on $\widetilde{X},$ the one-point compactification of $X.$ Then $f$ is expansive iff $f$ has a generator iff $f$ has a weak generator.
\end{lemma}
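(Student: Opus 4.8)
The plan is to establish the chain of equivalences by combining the classical theory of expansive homeomorphisms on compact metric spaces with the passage to the one‑point compactification. By Lemma \ref{6} it suffices to prove that $f$ is expansive if and only if $f$ has a generator, since "has a generator" and "has a weak generator" are already known to be equivalent. The key device is the extension $\widetilde{f}:\widetilde{X}\to\widetilde{X}$: I will show that $f$ is expansive on $(X,d)$ if and only if $\widetilde{f}$ is expansive on $(\widetilde{X},\widetilde{d})$, and that $f$ has a generator if and only if $\widetilde{f}$ does, and then invoke the classical result (Walters' book) that a homeomorphism of a compact metric space is expansive if and only if it has a generator.

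First I would treat the expansiveness transfer. If $\widetilde{f}$ is expansive with constant $\widetilde{\delta}$, then restricting to pairs $x\neq y$ in $X$ immediately gives that $f$ is expansive with the same constant (using that $\widetilde{d}$ restricts to $d$ and $\widetilde{f}$ restricts to $f$). For the converse, suppose $f$ is expansive with constant $\delta$; I must produce an expansive constant for $\widetilde{f}$. The only genuinely new pairs are those of the form $(\widetilde{x},\infty)$ with $\widetilde{x}\in X$. Here I would use properness of $f$: the forward and backward orbit of any point of $X$ cannot stay in a fixed compact set $K=\{y:\widetilde{d}(y,\infty)\ge\varepsilon\}$ for all times unless the point is somehow "escaping", and in fact since $\infty$ is fixed, $\widetilde{d}(\widetilde{f}^n(x),\widetilde{f}^n(\infty))=\widetilde{d}(f^n(x),\infty)$, and for $x\neq\infty$ the orbit $\{f^n(x)\}$ is not contained in any compact subset of $X$ in at least one time direction — more carefully, if it were bounded away from $\infty$ in both directions the orbit closure would be a compact $f$‑invariant subset of $X$, and one shows $d(f^n x,\infty)$ must exceed a fixed threshold for some $n$. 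This separates $(\widetilde x,\infty)$. Combining with $\delta$ handles pairs in $X$, so some common constant works for $\widetilde f$.

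Next I would transfer the generator condition. An admissible cover $\alpha$ of $X$ extends, via the construction $\widetilde U_i=\{y:\exists x\in U_i,\ \widetilde d(x,y)<|U_i|\}$ used in Theorem \ref{4} and Theorem \ref{5}, together with the set at infinity, to a finite open cover $\widetilde\alpha$ of $\widetilde X$; conversely a finite open cover of $\widetilde X$ restricts to an admissible cover of $X$ (the element containing $\infty$ has compact complement in $X$, hence its trace is admissible, and the others have compact closure). One checks that $\alpha$ separates bisequences in $X$ (in the generator sense) if and only if the corresponding cover of $\widetilde X$ separates bisequences in $\widetilde X$: the only extra bisequence is the constant one at $\infty$, whose intersection $\bigcap f^{-n}\bar A_n$ is $\{\infty\}$, a single point, so the "at most one point" condition is automatically respected. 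Then by the classical equivalence on the compact space $\widetilde X$, $\widetilde f$ has a generator if and only if $\widetilde f$ is expansive, and chaining the two transfers gives the lemma.

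The main obstacle I anticipate is the converse direction of the expansiveness transfer — producing a uniform expansive constant for $\widetilde f$ that also separates each $\widetilde x\in X$ from $\infty$. Naively one might fear that points of $X$ could approach $\infty$ under iteration without ever being $\delta$‑far from $\infty$, but this is where properness of $f$ is essential: a full orbit that never leaves a neighborhood of $\infty$ of fixed size would have compact closure inside $X$ (by properness, using both $f$ and $f^{-1}$, i.e. that $f$ is a homeomorphism), contradicting that $\infty\neq\widetilde x$ is the limit. Making this quantitative — i.e., extracting a single $\delta'>0$ that works simultaneously for all $\widetilde x$ — is the step that requires care; a compactness argument on $\widetilde X$ applied to the extension should close it, and this is exactly why the hypothesis that $d$ is the restriction of a metric on $\widetilde X$ (via Lemma \ref{3}) is used.
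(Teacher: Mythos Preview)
Your route through the one-point compactification is unnecessarily circuitous and has real gaps; the paper instead gives the classical Walters argument verbatim, using only that admissible covers of arbitrarily small diameter exist (Lemma~\ref{3}) and that admissible covers have Lebesgue numbers (Lemma~\ref{2}). Concretely: if $\delta$ is an expansive constant, take an admissible cover $\alpha$ of diameter less than $\delta/2$; any two points in $\bigcap_n f^{-n}\bar A_n$ have their full orbits $\delta$-close, hence coincide. Conversely, if $\alpha$ is a generator with Lebesgue number $\delta$, then any two points whose full orbits stay $\delta$-close land in a common $\bigcap_n f^{-n}A_n$, hence coincide. No compactification is needed.

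The main gap in your proposal is the step ``$f$ expansive $\Rightarrow$ $\widetilde f$ expansive''. Your sketch is confused about directions: you write that an orbit staying in a fixed neighborhood of $\infty$ would have compact closure in $X$, but that neighborhood is the \emph{complement} of a compact set, so staying near $\infty$ says nothing about compact closure. What you actually must show is that every $x\in X$ has some iterate with $\widetilde d(f^n x,\infty)\ge\delta'$ for a \emph{uniform} $\delta'>0$; your compactness hand-wave does not produce this. In fact the paper proves this implication as a \emph{separate} lemma (Lemma~\ref{8}) and does so by first invoking Lemma~\ref{7} to pass to weak generators, so your approach would reverse the logical dependency. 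Your generator transfer is also incomplete: you claim ``the only extra bisequence is the constant one at $\infty$'', but bisequences in the cover of $\widetilde X$ can use the element containing $\infty$ at arbitrary positions mixed with other elements, and checking those intersections are singletons takes genuine work (compare the proof of Lemma~\ref{8}).
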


\begin{proof}
By Lemma \ref{6} it suffices to show $f$ is expansive iff $f$ has a generator. Let $\delta$ be an expansive constant for $f$ and $\alpha$ an admissible cover of radius less than  $\delta/2.$ Suppose $x,y\in \bigcap_{n=-\infty}^{\infty}f^{-n}\bar{A}_{n}$ where $A_{n}\in \alpha.$ Then $d(f^{n}(x),~f^{n}(y))\leq\delta$, $\forall n\in \mathbb{Z}$. So, by assumption $x=y.$ Therefore $\alpha$ is a generator.
On the other hand, suppose $\alpha$ is a generator. Let $\delta$ be a Lebesgue number for $\alpha.$ If $d(f^{n}(x),~f^{n}(y))\leq\delta$, $\forall n\in \mathbb{Z}$. Then for $\forall n\in \mathbb{Z}$,
there exists $A_{n}\in \alpha$ with $f^{n}(x)$, $f^{n}(y)\in A_{n}$ and so,
$x,y\in \bigcap_{n=-\infty}^{\infty}f^{-n}A_{n}$.
Since this intersection contains at most one point we have $x=y.$ Hence $f$ is expansive.
\end{proof}

\begin{remark}
Lemma \ref{6} and Lemma \ref{7} are the extensions of the results of the compact systems\cite{Walters}.
\end{remark}

\begin{lemma}\label{8}
Let $f: X\rightarrow X$ be an expansive homeomorphism of a locally compact separable metric space $(X,d),$ where $d$ is the restriction of some metric $\widetilde{d}$ on $\widetilde{X},$ the one-point compactification of $X.$ Then $\widetilde{f}$ is an expansive homeomorphism, where $\widetilde{f}$ is the extension of $f.$
\end{lemma}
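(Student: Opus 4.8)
The plan is to prove directly from the definition that $\widetilde f$ is expansive, manufacturing an explicit expansive constant for $\widetilde f$ from one for $f$, rather than routing through Lemma \ref{7} and a generator of $f$. (The naive extension of a generator $\alpha$ of $f$ to an open cover of $\widetilde X$---adjoining $\infty$ to every co-compact member of $\alpha$---can fail to be a generator of $\widetilde f$, since a fixed point $x_0$ of $f$ lying in a co-compact $A\in\alpha$ gives $\{x_0,\infty\}$ inside the corresponding intersection, so that route ultimately needs the estimate below anyway.) I would first note that $\widetilde f$ is automatically a homeomorphism: it is a continuous bijection of the compact Hausdorff space $\widetilde X$ (a bijection because $f$ is and $\widetilde f(\infty)=\infty$), so only expansiveness has to be proved.

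Fix an expansive constant $\delta>0$ for $f$ with respect to $d=\widetilde d|_X$. The one step that needs an argument is structural: any two distinct fixed points $p\neq q$ of $f$ satisfy $d(p,q)>\delta$, because $d(f^np,f^nq)=d(p,q)$ for all $n\in\mathbb{Z}$ and $f$ is expansive; hence $\mathrm{Fix}(f)$ is a $\delta$-separated subset of the totally bounded space $(\widetilde X,\widetilde d)$, hence finite, hence a closed subset of $X$ that avoids $\infty$. Put $c:=\widetilde d(\mathrm{Fix}(f),\infty)>0$, with the convention $c:=+\infty$ when $\mathrm{Fix}(f)=\emptyset$, and set $\varepsilon:=\min(\delta,c)/2>0$. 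I would then check that $\varepsilon$ is an expansive constant for $\widetilde f$ by splitting, for a given pair $\widetilde x\neq\widetilde y$ in $\widetilde X$, according to its location. If $\widetilde x,\widetilde y\in X$, expansiveness of $f$ supplies $n$ with $\widetilde d(\widetilde f^n\widetilde x,\widetilde f^n\widetilde y)=d(f^n\widetilde x,f^n\widetilde y)>\delta>\varepsilon$. If one of them is $\infty$ and the other, $\widetilde x\in X$, is a fixed point of $f$, then $\widetilde d(\widetilde f^n\widetilde x,\widetilde f^n\infty)=\widetilde d(\widetilde x,\infty)\geq c\geq 2\varepsilon>\varepsilon$ for every $n$. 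If one is $\infty$ and the other, $\widetilde x\in X$, is not fixed, then $\widetilde x\neq f(\widetilde x)$, so expansiveness of $f$ gives $m$ with $d(f^m\widetilde x,f^{m+1}\widetilde x)>\delta$; since $\widetilde d(f^m\widetilde x,\infty)+\widetilde d(f^{m+1}\widetilde x,\infty)\geq d(f^m\widetilde x,f^{m+1}\widetilde x)>\delta$, one of $n=m$ and $n=m+1$ has $\widetilde d(\widetilde f^n\widetilde x,\widetilde f^n\infty)=\widetilde d(f^n\widetilde x,\infty)>\delta/2\geq\varepsilon$. These exhaust all cases.

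The main obstacle---really the only thing beyond routine triangle-inequality bookkeeping---is the structural observation that expansiveness forces $\mathrm{Fix}(f)$ to be $\delta$-separated, hence finite, hence bounded away from $\infty$; this is exactly what prevents the potentially troublesome pairs $(x_0,\infty)$, with $x_0$ a fixed point of $f$ sitting near $\infty$, from destroying expansiveness of $\widetilde f$. Everything else uses only that $\widetilde d$ restricts to $d$ on $X$, that $\widetilde f|_X=f$, and that $\widetilde f(\infty)=\infty$.
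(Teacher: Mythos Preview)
Your proof is correct and takes a genuinely different route from the paper. The paper goes through Lemma~\ref{7}: it converts expansiveness of $f$ into a weak generator $\alpha=\{A_1,\dots,A_k\}$, forms the cover $\beta=\alpha\cup\{\widetilde B(\infty,\delta/2)\}$ of $\widetilde X$ (with $\delta$ a Lebesgue number of $\alpha$), and argues that $\beta$ is a weak generator for $\widetilde f$ by replacing any occurrence of $\widetilde B(\infty,\delta/2)$ in a bisequence by some $A_j\in\alpha$ containing $\widetilde B(\infty,\delta/2)\setminus\{\infty\}$. You instead work directly from the metric definition of expansiveness, the pivotal step being that $\mathrm{Fix}(f)$ is $\delta$-separated and hence finite in the totally bounded space $\widetilde X$, hence at positive $\widetilde d$-distance from $\infty$. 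Your route is more self-contained (it does not invoke Lemmas~\ref{6}--\ref{7}), and in fact your key observation is exactly what the paper's argument tacitly needs but omits: when \emph{every} term of the bisequence equals $\widetilde B(\infty,\delta/2)$, the paper's displayed equality $(\{\infty\}\cup f^{-k}A_j)\cap\bigcap_{n\neq k}\widetilde f^{\,-n}B_n=f^{-k}A_j\cap\bigcap_{n\neq k}\widetilde f^{\,-n}B_n$ fails, and the full intersection $\bigcap_n\widetilde f^{\,-n}\widetilde B(\infty,\delta/2)$ may contain both $\infty$ and a fixed point of $f$ lying in $\widetilde B(\infty,\delta/2)\setminus\{\infty\}$. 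Shrinking that ball to miss $\mathrm{Fix}(f)$ repairs the paper's argument, but that again requires knowing $\mathrm{Fix}(f)$ is finite---precisely your observation. Your parenthetical remark already anticipated this obstruction.
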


\begin{proof}
By the Lemma \ref{7}, $f$ has a weak generator. Now suppose $\alpha$ is the weak generator, $\alpha=\{A_{1},\cdots,A_{k}\}.$ By Lemma \ref{1},
$\alpha$ has Lebesgue number.
Let $\delta$ be a Lebesgue number for $\alpha.$ Denote the open ball in $\widetilde{X}$ of radius $\delta/2$ centered in $\infty$ by $\widetilde{B}(\infty,\delta/2).$ Let $\beta=\{A_{1},\cdots,A_{k},\widetilde{B}(\infty,\delta/2)\},$ then $\beta$ is an open cover of $\widetilde{X}.$
For every bisequence $\{B_{n}\}_{-\infty}^{\infty}$ of members of $\beta,$ if $\widetilde{B}(\infty,\delta/2)\not\in\{B_{n}\}_{-\infty}^{\infty},$ then $\bigcap_{n=-\infty}^{\infty}f^{-n}B_{n}$
contains at most one point. If $\widetilde{B}(\infty,\delta/2)\in\{B_{n}\}_{-\infty}^{\infty},$ suppose $\widetilde{B}(\infty,\delta/2)=B_{k},$ then there exists $A_{j}\in \alpha,$ such that $B_{k}-\{\infty \}\subseteq A_{j}.$ Then
\begin{eqnarray*}
\bigcap_{n=-\infty}^{\infty}f^{-n}B_{n}&=& f^{-k}B_{k}\cap\bigcap_{n=-\infty,n\neq k}^{\infty}f^{-n}B_{n}\\
&\subseteq &(\{\infty\}\cup f^{-k}A_{j})\cap\bigcap_{n=-\infty,n\neq k}^{\infty}f^{-n}B_{n}\\
&=& f^{-k}A_{j}\cap\bigcap_{n=-\infty,n\neq k}^{\infty}f^{-n}B_{n}
\end{eqnarray*}
also contains at most one point.
Then $\beta$ is a weak generator of $\widetilde{f}$ and $\widetilde{f}$ is expansive.

\end{proof}

Following Krin and Katok \cite{BK}, we introduce the notions of local entropy of a proper map.

\begin{definition}
Let ~$(X,d)$ be a metric space and $f:X\rightarrow X$ a proper map, we introduce the lower and upper local entropies at ~$x\in X$ as follows
$$\underline{h}_{\mu}(f,x):=\lim_{\varepsilon\rightarrow 0}\liminf_{n\rightarrow\infty}
-\frac{1}{n}\log\mu(B_{n}(x,\varepsilon)),$$
$$\overline{h}_{\mu}(f,x):=\lim_{\varepsilon\rightarrow 0}\limsup_{n\rightarrow\infty}
-\frac{1}{n}\log\mu(B_{n}(x,\varepsilon)),$$
where ~$\mu$ is a Borel probability measure on ~$X$ and $B_{n}(x,\varepsilon)$ is the Bowen ball of~$x.$
We say that the local entropy exists at $x$ if
$$\underline{h}_{\mu}(f,x)=\overline{h}_{\mu}(f,x).$$
In this case the common value will be denoted by ~$h_{\mu}(f,x).$

\end{definition}

Similar to the compact systems, we introduce the following notions.

\begin{definition}
Let ~$f:X\rightarrow X$ be a proper map of metric space~$(X,d)$ and
~$\varphi\in C(X,\mathbb{R})$.
A member ~$\mu\in M(X,f)$ is called an equilibrium state for ~$\varphi$
if $$ P_{X}(\varphi)=h_{\mu}(f)+\int\varphi d\mu .$$
\end{definition}

\begin{definition}
Let ~$(X,d)$ be a metric space and $f:X\rightarrow X$ a proper map, we say that ~$\varphi \in
\mathcal{V}_f (X)$ if it is continuous and there exist ~$\varepsilon>0$ and ~$K>0$ such that for all
~$n\in \mathbb{N},$
$$d(f^k(x),f^k(y))<\varepsilon, k=0,\cdots,n-1
\Rightarrow |S_n(\varphi)(x)-S_n(\varphi)(y)|< K. $$
\end{definition}

Following \cite{BPS} and \cite{TV}, we introduce a multifractal spectrum for (local) entropies. For
every ~$\alpha$ consider a level set of local entropy
$$K_\alpha=\{x\in X: h_\mu(f,x)=\alpha \} , $$
and the corresponding multifractal decomposition on level sets
$$ X=\bigcup_{\alpha} K_\alpha \bigcup\{ x\in X: h_\mu(f,x)\text{does not exist}\}.$$
We can use the topological entropy of proper map introduced in \cite{MC}, to measure the size of sets$\{K_\alpha \}.$
Namely, define a multifractal spectrum for local entropies as follows:
$$\mathcal{E}_E (\alpha)=h_{K_\alpha}(f).$$

For compact systems, we have the following result.

\begin{theorem}(\cite{Bowen1,Ruelle,KH})\label{*}
Let ~$X$ be a compact metric space and ~$f:X\rightarrow X$ an expansive homeomorphism with specification and $\varphi\in \mathcal{V}_{f} (X)$, then there exists a unique measure ~$\mu_{\varphi}$ such that
   $$P_{X}(\varphi)=h_{\mu_{\varphi}}(f)+\int\varphi d\mu_{\varphi} ,$$
where $P_{X}(\varphi)$ is the classical topological pressure \cite{Walters} and coincide with $P_{X}^{PP}(\varphi).$ Moreover, ~$\mu_{\varphi}$ is ergodic, positive on open sets and mixing.
\end{theorem}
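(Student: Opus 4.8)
The plan is to follow Bowen's construction of equilibrium states for expansive systems with specification. First I would build a candidate measure: fix $\varepsilon>0$ smaller than an expansive constant for $f$ and smaller than the constant attached to $\varphi\in\mathcal{V}_f(X)$, for each $n$ choose a maximal $(n,\varepsilon)$-separated set $E_n$, form the weighted atomic measure $\sigma_n=\big(\sum_{x\in E_n}e^{S_n\varphi(x)}\big)^{-1}\sum_{x\in E_n}e^{S_n\varphi(x)}\delta_x$, set $\mu_n=\frac1n\sum_{k=0}^{n-1}f^k_\ast\sigma_n$, and let $\mu_\varphi$ be a weak$^\ast$ accumulation point of $(\mu_n)$. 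Compactness of $X$ supplies such a limit and the Cesàro averaging makes $\mu_\varphi\in M(X,f)$. The standard Misiurewicz-type entropy estimate applied along the subsequence, together with the classical fact that $\lim_n\frac1n\log\sum_{x\in E_n}e^{S_n\varphi(x)}=P_X(\varphi)$ (and that the Pesin--Pitskel quantity $P_X^{PP}(\varphi)$ agrees with this for compact $X$), gives $h_{\mu_\varphi}(f)+\int\varphi\,d\mu_\varphi\ge P_X(\varphi)$; the reverse inequality is the classical variational principle, so $\mu_\varphi$ is an equilibrium state.

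The heart of the matter is uniqueness, and here specification is used to upgrade $\mu_\varphi$ to a Gibbs measure. Specification lets one glue orbit segments to bound from below the number of $(n,\varepsilon)$-separated points whose Birkhoff sums are comparable to $S_n\varphi(x)$; combined with expansiveness (so that the Bowen ball $B_n(x,\varepsilon)$ is small enough to be resolved by separated sets) and with $\varphi\in\mathcal{V}_f(X)$ (which bounds the distortion of $S_n\varphi$ across such a ball by the constant $K$), this yields constants $A_1,A_2>0$ with
\[
A_1\le\frac{\mu_\varphi\big(B_n(x,\varepsilon)\big)}{\exp\big(-nP_X(\varphi)+S_n\varphi(x)\big)}\le A_2
\qquad\text{for all }x\in X,\ n\ge1.
\]
Any equilibrium state for $\varphi$ satisfies the same two-sided bounds, so if $\nu$ is another equilibrium state then $\nu$ and $\mu_\varphi$ are uniformly comparable on all Bowen balls, hence mutually absolutely continuous with Radon--Nikodym derivative bounded above and below; an ergodic-decomposition argument (or a direct martingale argument on the derivative over the generating sequence of partitions refining the Bowen balls) then forces $\nu=\mu_\varphi$.

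Once uniqueness is established the remaining assertions are soft. Ergodicity: the map $\mu\mapsto h_\mu(f)+\int\varphi\,d\mu$ is affine on $M(X,f)$ and finite (finite since $f$ is expansive, hence of finite topological entropy), so its set of maximizers is a face of the simplex whose extreme points are ergodic; as this face is a single point, $\mu_\varphi$ is ergodic. Positivity on open sets: any nonempty open $U$ contains some Bowen ball $B_n(x,\varepsilon)$, and the Gibbs lower bound gives $\mu_\varphi(U)>0$. Mixing: specification implies topological mixing, and for the unique equilibrium state one deduces measure-theoretic mixing by approximating indicators of dynamical cylinders by specification-glued orbit segments and controlling the error with the Gibbs bounds; alternatively one checks that $f^k$ again is expansive with specification, that $S_k\varphi\in\mathcal{V}_{f^k}(X)$, and that the resulting uniqueness of the $f^k$-equilibrium state is equivalent to mixing of $\mu_\varphi$.

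The main obstacle is the Gibbs estimate, and specifically the upper bound $\mu_\varphi(B_n(x,\varepsilon))\le A_2\,e^{-nP_X(\varphi)+S_n\varphi(x)}$: this is precisely where expansiveness is essential (to convert a counting bound on $(n,\varepsilon)$-separated sets into a bound on the measure of a single dynamical ball) and where the distortion constant $K$ from $\mathcal{V}_f(X)$ must be tracked carefully through the weak$^\ast$ limit. The gluing combinatorics coming from specification are elementary but must be arranged so that $A_1$ and $A_2$ are genuinely independent of $n$; everything else is bookkeeping around the variational principle.
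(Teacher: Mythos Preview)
The paper does not supply a proof of this theorem: it is stated as a classical result with citations to Bowen, Ruelle, and Katok--Hasselblatt, and is then used as a black box in the proof of Lemma~\ref{**} and the final theorem. There is therefore nothing in the paper to compare your proposal against.

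That said, your outline is a faithful sketch of the argument one finds in those references, particularly Bowen's original paper. The construction of the candidate measure via weighted separated sets and weak$^\ast$ limits, the Misiurewicz entropy estimate to verify the equilibrium equation, and the derivation of the two-sided Gibbs bound from specification plus expansiveness plus the $\mathcal{V}_f(X)$ distortion control are all correctly identified as the load-bearing steps. One point to tighten: the sentence ``any equilibrium state for $\varphi$ satisfies the same two-sided bounds'' is the crux of uniqueness and is not automatic---in Bowen's argument one first establishes ergodicity of $\mu_\varphi$ from the Gibbs property, then shows that any ergodic equilibrium state $\nu$ must be absolutely continuous with respect to $\mu_\varphi$ (using the Gibbs upper bound together with the Shannon--McMillan--Breiman/Brin--Katok theorem for $\nu$), whence $\nu=\mu_\varphi$ by ergodicity. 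Your ``ergodic-decomposition or martingale'' parenthetical gestures at this, but the logical order matters: ergodicity of $\mu_\varphi$ is an input to uniqueness, not merely a corollary of it. Apart from that ordering issue, the proposal is sound and matches the cited literature.
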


\begin{theorem}(\cite{TV})\label{***}
Let ~$f$ be an expansive homeomorphism with the specification property of a compact metric space ~$(X,d).$ Let
~$ \varphi\in\mathcal{V}_f (X)$ and $\mu=\mu_\varphi$ be the corresponding equilibrium state. Then

(1)~For ~$\mu-a.e.x\in X$ the local entropy at ~$x$ exists and
$$h_\mu(f,x)=h_\mu(f)=P_{X}(\varphi)-\int\varphi d\mu. $$

(2)~ For any ~$q\in\mathbb{R}$ define the function
$$ T(q)=P_{X}(q\varphi)-qP_{X}(\varphi). $$
Then ~$T(q)$ is a convex ~$\mathcal{C}^1$ function of ~$q.$ Moreover, ~$T(0)=h(f),~T(1)=0,$
for every ~$q\in \mathbb{R}$ one has ~$T^{'}(q)=\int\varphi d\mu_q-P_{X}(\varphi)\leq0,$
where ~$\mu_q$ is the equilibrium state for ~$\varphi_q=q\varphi-P_{X}(q\varphi),$
$P_{X}(\varphi)$ is the classical topological pressure\cite{Walters}, $h(f)$ is the classical topological entropy\cite{AKM}.

(3)~ Put ~$\alpha(q)=-T^{'}(q).$ Then
$$\mathcal{E}_E (\alpha(q)):=h_{K_{\alpha(q)}}(f)=T(q)+q\alpha(q),$$
where $h_{Z}(f)$ denotes the Bowen topological entropy \cite{Bowen} of any subset $Z\subseteq X.$
Define
$$\underline{\alpha}=\inf_q \alpha(q)=\lim_{q\rightarrow +\infty}\alpha(q),$$
$$\overline{\alpha}=\sup_q \alpha(q)=\lim_{q\rightarrow -\infty}\alpha(q).$$
Then ~$K_\alpha=\emptyset$ if ~$\alpha \not\in [\underline{\alpha},~\overline{\alpha}]$.
It means that the domain of the multifractal spectrum for local entropies $\alpha\rightarrow \mathcal{E}_{E}(\alpha)$ is the range of the function $q\rightarrow -T^{'}(q).$

(4)~If the equilibrium state ~$\mu$ for the potential ~$\varphi$ is not a measure of maximal entropy, then the relation between ~$\mathcal{E}_E$ and ~$T(q)$ can be written in the following variational form:
$$ \mathcal{E}_E(\alpha)=\inf_{q\in\mathbb{R}}(T(q)+q\alpha),~~~
\alpha\in(\underline{\alpha},\overline{\alpha}),$$
$$T(q)=\sup_{\alpha\in(\underline{\alpha},\overline{\alpha})}
(\mathcal{E}_E(\alpha)-q\alpha),~~~ q\in\mathbb{R}.$$
This implies that ~$\mathcal{E}_E$ is strictly concave and continuously differentiable on ~$(\underline{\alpha},\overline{\alpha})$ with the derivative given by ~$\mathcal{E}_{E}^{'}(\alpha)=q,$
 where ~$q\in\mathbb{R}$ is such that ~$\alpha=-T^{'}(q).$

(5)~ For every ~$q\in\mathbb{R},~q\neq1$, the following limit exists:
$$ h_\mu(f,q)=\lim_{\varepsilon\rightarrow0}\lim_{n\rightarrow\infty}
-\frac{1}{n(q-1)}\log\int\mu(B_n(x,\varepsilon))^{q-1}d\mu.$$
For ~$q\neq1$ one has
$$ h_\mu(f,q)=-\frac{T(q)}{q-1}.$$
The family of correlation entropies ~$h_\mu(f,q)$ depends continuously on ~$q$ and
$$h_\mu(f,0)=h(f),$$
$$h_\mu(f,1):=\lim_{q\rightarrow1}h_\mu(f,q)=h_\mu(f).$$

\end{theorem}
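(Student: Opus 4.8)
The plan is to deduce all five assertions from the Gibbs property of the equilibrium state $\mu=\mu_\varphi$, combined with convexity, the Legendre transform, and a mass-distribution principle for Bowen's dimension-type entropy $h_Z(f)$. The foundational step, used in every part, is the following: since $f$ is expansive with the specification property and $\varphi\in\mathcal{V}_f(X)$, Bowen's construction underlying Theorem \ref{*} not only yields the unique equilibrium state $\mu_\varphi$ but also shows it satisfies, for a fixed small $\varepsilon>0$, a Gibbs inequality
$$C^{-1}\le\frac{\mu(B_n(x,\varepsilon))}{\exp\big(-nP_X(\varphi)+S_n\varphi(x)\big)}\le C,\qquad x\in X,\ n\ge 1,$$
with some constant $C=C(\varepsilon)\ge 1$. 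Taking logarithms gives $-\frac1n\log\mu(B_n(x,\varepsilon))=P_X(\varphi)-\frac1n S_n\varphi(x)+O(1/n)$, and since $\mu_\varphi$ is ergodic (Theorem \ref{*}) Birkhoff's theorem yields $\frac1n S_n\varphi(x)\to\int\varphi\,d\mu$ for $\mu$-a.e.\ $x$, with no dependence on $\varepsilon$; this proves part (1), the limiting value $P_X(\varphi)-\int\varphi\,d\mu$ being $h_\mu(f)$ because $\mu$ is the equilibrium state.

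For part (2): convexity of $q\mapsto P_X(q\varphi)$ is H\"older's inequality applied in the variational principle, so $T$ is convex; differentiability at every $q$ with $\frac{d}{dq}P_X(q\varphi)=\int\varphi\,d\mu_q$ follows from uniqueness of the equilibrium state $\mu_q$ of $q\varphi$ (again expansiveness plus specification) via the standard tangent-functional argument, whence $T\in\mathcal{C}^1$ and $T'(q)=\int\varphi\,d\mu_q-P_X(\varphi)$. Then $T(0)=P_X(0)=h(f)$, $T(1)=0$, and $T'(q)\le 0$ since $\int\varphi\,d\mu_q\le h_{\mu_q}(f)+\int\varphi\,d\mu_q\le P_X(\varphi)$ by the variational principle. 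For part (3), note first that by the Gibbs formula of part (1), $x\in K_\alpha$ precisely when $\frac1n S_n\varphi(x)\to P_X(\varphi)-\alpha$, so each $K_\alpha$ is a Birkhoff level set of $\varphi$. Fixing $q$ and putting $\alpha(q)=-T'(q)=P_X(\varphi)-\int\varphi\,d\mu_q$, ergodicity of $\mu_q$ gives $\mu_q(K_{\alpha(q)})=1$, so the lower half of the variational principle for $h_Z(f)$ (a Billingsley-type mass-distribution estimate) gives $h_{K_{\alpha(q)}}(f)\ge h_{\mu_q}(f)=P_X(q\varphi)-q\int\varphi\,d\mu_q=T(q)+q\alpha(q)$. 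The reverse inequality comes from a Bowen-type covering estimate: on $K_\alpha$ one has $S_n(q\varphi)(x)=qS_n\varphi(x)\le nq(P_X(\varphi)-\alpha)+o(n)$, and $P_X(q\varphi)$ controls $\sum\exp(S_n(q\varphi))$ over $(n,\varepsilon)$-covers, giving $h_{K_\alpha}(f)\le P_X(q\varphi)-q(P_X(\varphi)-\alpha)=T(q)+q\alpha$ for every $q$; hence $h_{K_\alpha}(f)\le\inf_q(T(q)+q\alpha)$, and at $q$ with $\alpha=\alpha(q)$ this matches the lower bound, establishing $\mathcal{E}_E(\alpha(q))=T(q)+q\alpha(q)$. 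The same covering estimate forces $h_{K_\alpha}(f)=-\infty$, i.e.\ $K_\alpha=\emptyset$, when $\alpha\notin[\underline\alpha,\overline\alpha]$, and $\underline\alpha=\lim_{q\to+\infty}\alpha(q)$, $\overline\alpha=\lim_{q\to-\infty}\alpha(q)$ follow from monotonicity of $T'$.

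For part (4): the assumption that $\mu$ is not a measure of maximal entropy is, under specification and uniqueness, equivalent to $\varphi$ not being cohomologous to a constant, hence to strict convexity of $q\mapsto P_X(q\varphi)$ and so of $T$; then $\alpha(q)=-T'(q)$ is a strictly decreasing continuous bijection of $\mathbb{R}$ onto $(\underline\alpha,\overline\alpha)$, and part (3) reads $\mathcal{E}_E(\alpha)=T(q(\alpha))+q(\alpha)\alpha$ with $q(\alpha)=(T')^{-1}(-\alpha)$, which is exactly the Legendre transform $\inf_{q\in\mathbb{R}}(T(q)+q\alpha)$; the dual formula $T(q)=\sup_\alpha(\mathcal{E}_E(\alpha)-q\alpha)$ and the strict concavity and differentiability of $\mathcal{E}_E$ with $\mathcal{E}_E'(\alpha)=q$ are standard Legendre calculus. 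For part (5): inserting the Gibbs bound into the integral and partitioning $X$ into essentially disjoint $(n,\varepsilon)$-Bowen balls $B_n(x_i,\varepsilon)$ gives
$$\int\mu(B_n(x,\varepsilon))^{q-1}\,d\mu\asymp\sum_i\mu(B_n(x_i,\varepsilon))^{q}\asymp\sum_i\exp\big(q(S_n\varphi(x_i)-nP_X(\varphi))\big)\asymp\exp\big(n(P_X(q\varphi)-qP_X(\varphi))\big)=e^{nT(q)},$$
uniformly up to subexponential factors, so $-\frac1{n(q-1)}\log\int(\cdots)\to-\frac{T(q)}{q-1}$, the limit existing since $T$ is independent of $\varepsilon$ and $n$; continuity in $q$ is continuity of $T$, $h_\mu(f,0)=T(0)=h(f)$, and $h_\mu(f,1)=\lim_{q\to1}-\frac{T(q)}{q-1}=-T'(1)$ by L'H\^opital (as $T(1)=0$), which is $P_X(\varphi)-\int\varphi\,d\mu_1=h_\mu(f)$ because $\mu_1=\mu$.

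The main obstacle I expect is the pair of items underpinning part (3): first, establishing the Gibbs property of $\mu_\varphi$ in the required two-sided form, and second, the variational principle for Bowen's entropy $h_Z(f)$ on the irregular level sets $K_\alpha$ --- the lower bound needs a mass-distribution/Billingsley argument tailored to this dimension-like quantity, while the matching Bowen-type upper bound must control $S_n\varphi$ uniformly on the elements of an arbitrary $(n,\varepsilon)$-cover while letting $P_X(q\varphi)$ absorb the total weight. Once part (3) is in place, the convexity input of part (2), the Legendre bookkeeping of part (4), and the integral asymptotics of part (5) are routine.
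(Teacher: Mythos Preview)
The paper does not prove this theorem. Theorem~\ref{***} is stated with the attribution \cite{TV} and is quoted as a known result for compact systems; no proof environment follows it, and the paper uses it purely as a black box to establish the subsequent extension to locally compact separable spaces (the final theorem of Section~5). There is therefore nothing in the paper to compare your proposal against.

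That said, your sketch is a faithful outline of the argument in Takens--Verbitski itself: the Gibbs property of $\mu_\varphi$ under expansiveness plus specification is indeed the engine; part~(1) then follows from Birkhoff's theorem, part~(2) from the tangent-functional characterization of differentiability of pressure under uniqueness of equilibrium states, part~(3) from a mass-distribution lower bound together with a Bowen-type covering upper bound on the Birkhoff level sets $K_\alpha$, and parts~(4) and~(5) are Legendre bookkeeping and direct substitution of the Gibbs estimate into the correlation integral. You have also correctly located the real work in part~(3), and the obstacle you anticipate --- controlling $S_n\varphi$ uniformly on cover elements while letting $P_X(q\varphi)$ absorb the total weight --- is precisely where \cite{TV} expends its effort. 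For the purposes of the present paper, however, none of this is needed: the statement is simply cited.
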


\begin{lemma}\label{**}
Let $f: X\rightarrow X$ be an expansive homeomorphism with specification of a locally compact separable metric space $(X,d),$ where $d$ is the restriction of some metric $\widetilde{d}$ on $\widetilde{X},$ the one-point compactification of $X.$ If $ \varphi\in C(X,\mathbb{R})$ can be continuously extended to $\widetilde{X}$ denoted by  ~$\widetilde{\varphi},$ $\widetilde{\varphi}\in\mathcal{V}_{\widetilde{f}}(\widetilde{X}).$ Then there exists an equilibrium state for $\varphi.$
\end{lemma}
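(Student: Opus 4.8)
The plan is to push the problem up to the one-point compactification $\widetilde{X}$, solve it there with the classical theory, and then pull the equilibrium state back down to $X$. First I would check that $\widetilde{f}:\widetilde{X}\to\widetilde{X}$ inherits the hypotheses needed to invoke Theorem~\ref{*}. By Lemma~\ref{8}, $\widetilde{f}$ is an expansive homeomorphism of the compact metric space $\widetilde{X}$; by Lemma~\ref{9}, $\widetilde{f}$ has the specification property; and by assumption $\widetilde{\varphi}\in\mathcal{V}_{\widetilde{f}}(\widetilde{X})$. Hence Theorem~\ref{*} yields a unique measure $\widetilde{\mu}:=\mu_{\widetilde{\varphi}}\in M(\widetilde{X},\widetilde{f})$ with
$$P_{\widetilde{X},\widetilde{f}}^{PP}(\widetilde{\varphi})=h_{\widetilde{\mu}}(\widetilde{f})+\int_{\widetilde{X}}\widetilde{\varphi}\,d\widetilde{\mu},$$
and, moreover, $\widetilde{\mu}$ is ergodic and positive on nonempty open subsets of $\widetilde{X}$.

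The key step is to show $\widetilde{\mu}(\{\infty\})=0$. Since $\{\infty\}$ is a fixed point of $\widetilde{f}$, it is $\widetilde{f}$-invariant, so ergodicity of $\widetilde{\mu}$ forces $\widetilde{\mu}(\{\infty\})\in\{0,1\}$. If $\widetilde{\mu}(\{\infty\})=1$, then $\widetilde{\mu}=\delta_{\infty}$; but any nonempty open subset $V\subset X$ is open in $\widetilde{X}$ and does not contain $\infty$, so $\widetilde{\mu}(V)=0$, contradicting the positivity of $\widetilde{\mu}$ on open sets (here $X\neq\emptyset$, so such $V$ exists). Hence $\widetilde{\mu}(\{\infty\})=0$.

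Now I would define $\mu(A):=\widetilde{\mu}(A\cap X)$ for Borel $A\subset X$. Because $\widetilde{\mu}$ gives full measure to the $\widetilde{f}$-invariant Borel set $X=\widetilde{X}\setminus\{\infty\}$, $\mu$ is an $f$-invariant Borel probability measure on $X$, and the standard bookkeeping (the discarded point carries zero measure and zero entropy, as in the proof of Theorem~\ref{a}) gives $h_{\mu}(f)=h_{\widetilde{\mu}}(\widetilde{f})$ and $\int_{X}\varphi\,d\mu=\int_{\widetilde{X}}\widetilde{\varphi}\,d\widetilde{\mu}$. Finally, Theorem~\ref{5} gives $P_{X,f}(\varphi)=P_{\widetilde{X},\widetilde{f}}^{PP}(\widetilde{\varphi})$, so combining the three displayed identities,
$$P_{X,f}(\varphi)=h_{\mu}(f)+\int_{X}\varphi\,d\mu,$$
which means $\mu$ is an equilibrium state for $\varphi$.

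I expect the main obstacle to be precisely the verification that $\widetilde{\mu}(\{\infty\})=0$ — this is where the extra output of Theorem~\ref{*} (positivity on open sets, together with ergodicity) is essential — and the accompanying check that the restricted measure $\mu$ genuinely lies in $M(X,f)$ with the same entropy and integral as $\widetilde{\mu}$. Everything else is a straightforward transfer through Lemmas~\ref{8} and \ref{9} and the pressure identity of Theorem~\ref{5}.
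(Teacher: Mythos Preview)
Your proposal is correct and follows essentially the same approach as the paper: lift to $\widetilde{X}$ via Lemmas~\ref{8} and~\ref{9}, apply Theorem~\ref{*} to obtain the equilibrium state $\widetilde{\mu}$, use ergodicity and positivity on open sets to get $\widetilde{\mu}(\{\infty\})=0$, restrict to $X$, and invoke Theorem~\ref{5} for the pressure identity. In fact, your justification of $\widetilde{\mu}(\{\infty\})=0$ is more explicit than the paper's, which simply asserts it from the same two properties.
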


\begin{proof}
By Lemma \ref{8} and Lemma \ref{9}, we have that $\widetilde{f}:\widetilde{X}\rightarrow \widetilde{X}$ be an expansive homeomorphism with specification. Since $\widetilde{\varphi}\in\mathcal{V}_{\widetilde{f}}(\widetilde{X}),$ by Theorem \ref{*}, there exists an unique equilibrium state $\widetilde{\mu},$ such that
$$P_{\widetilde{X},\widetilde{f}}(\widetilde{\varphi})=h_{\widetilde{\mu}}(\widetilde{f})+
\int_{\widetilde{X}}\widetilde{\varphi}d\widetilde{\mu}.$$
Moreover, $\widetilde{\mu}$ is ergodic and positive on open sets. So $$\widetilde{\mu}(\{\infty\})=0.$$
Define $\mu=\widetilde{\mu}|_{\mathcal{B}(X)},$ then $\mu\in M(X,f)$ and $h_{\widetilde{\mu}}(\widetilde{f})=h_{\mu}(f),$
$\int_{\widetilde{X}}\widetilde{\varphi}d\widetilde{\mu}=\int_{X}\varphi d \mu.$
By Theorem \ref{5}, we have $P_{\widetilde{X},\widetilde{f}}(\widetilde{\varphi})=P_{X,f}(\varphi).$
Then $P_{X,f}(\varphi)=h_{\mu}(f)+\int_{X}\varphi d \mu.$
\end{proof}

Based on the Lemma \ref{**}, we have the following result, which is the extension of Theorem \ref{***}.

\begin{theorem}
Let $f: X\rightarrow X$ be an expansive homeomorphism with specification of a locally compact separable metric space $(X,d),$ where $d$ is the restriction of some metric $\widetilde{d}$ on $\widetilde{X},$ the one-point compactification of $X.$ If $ \varphi\in C(X,\mathbb{R})$ can be continuously extended to $\widetilde{X}$ denoted by  ~$\widetilde{\varphi},$ $\widetilde{\varphi}\in\mathcal{V}_{\widetilde{f}}(\widetilde{X})$ and ~$\mu$ is the corresponding equilibrium state for ~$\varphi,$ then

(1)~ For ~$\mu-a.e.x\in X$ the local entropy at ~$x$ exists and
$$h_\mu(f,x)=h_\mu(f)=P_{X,f}(\varphi)-\int_{X}\varphi d\mu. $$

(2)~ For any ~$q\in\mathbb{R}$ define the function
$$ T(q)=P_{X,f}(q\varphi)-qP_{X,f}(\varphi).$$
Then ~$T(q)$ is a convex ~$\mathcal{C}^1$ function of ~$q.$ Moreover ~$T(0)=h(f),~T(1)=0;$
for every ~$q\in \mathbb{R}$ one has ~$T^{'}(q)=\int_{X}\varphi~d\mu_q-P_{X,f}(\varphi)\leq0,$
where ~$\mu_q$ is the equilibrium state for ~$\varphi_q=q\varphi-P_{X,f}(q\varphi),$ $h(f)$ is the Patr\~{a}o topological entropy \cite{Patr}.

(3)~Put ~$\alpha(q)=-T^{'}(q)$. Then
$$\mathcal{E}_E (\alpha(q)):=h_{K_{\alpha(q)}}(f)=T(q)+q\alpha(q).$$

Define
$$\underline{\alpha}=\inf_q \alpha(q)=\lim_{q\rightarrow +\infty}\alpha(q),$$
$$\overline{\alpha}=\sup_q \alpha(q)=\lim_{q\rightarrow -\infty}\alpha(q).$$
Then ~$K_\alpha=\emptyset$ if ~$\alpha\not\in[\underline{\alpha},\overline{\alpha}].$
It means that the domain of the multifractal spectrum for local entropies $\alpha\rightarrow \mathcal{E}_{E}(\alpha)$ is the range of the function $q\rightarrow -T^{'}(q),$ where $h_{Z}(f)$ denote the Ma-Cai topological entropy\cite{MC}.

(4)~If the equilibrium state ~$\mu$ for the potential ~$\varphi$ is not a measure of maximal entropy, then the relation between ~$\mathcal{E}_E$ and ~$T(q)$ can be written in the following variational form:
$$ \mathcal{E}_E(\alpha)=\inf_{q\in\mathbb{R}}(T(q)+q\alpha),~~~
\alpha\in(\underline{\alpha},\overline{\alpha}),$$
$$T(q)=\sup_{\alpha\in(\underline{\alpha},\overline{\alpha})}
(\mathcal{E}_E(\alpha)-q\alpha),~~~ q\in\mathbb{R}.$$
This implies that ~$\mathcal{E}_E$ is strictly concave and continuously differentiable on ~$(\underline{\alpha},\overline{\alpha})$ with the derivative given by ~$\mathcal{E}_{E}^{'}(\alpha)=q,$
 where ~$q\in\mathbb{R}$ is such that ~$\alpha=-T^{'}(q).$

(5)~ For every ~$q\in\mathbb{R},~q\neq1$, the following limit exists:
$$ h_\mu(f,q)=\lim_{\varepsilon\rightarrow0}\lim_{n\rightarrow\infty}
-\frac{1}{n(q-1)}\log\int\mu(B_n(x,\varepsilon))^{q-1}d\mu.$$
For ~$q\neq1$ one has
$$ h_\mu(f,q)=-\frac{T(q)}{q-1}.$$
The family of correlation entropies ~$h_\mu(f,q)$ depends continuously on ~$q$ and
$$h_\mu(f,0)=h(f),$$
$$h_\mu(f,1):=\lim_{q\rightarrow1}h_\mu(f,q)=h_\mu(f).$$
\end{theorem}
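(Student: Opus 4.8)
The plan is to transfer the entire statement from the system $(X,f,\varphi)$ to its one-point compactification $(\widetilde{X},\widetilde{f},\widetilde{\varphi})$, apply Theorem \ref{***} there, and translate the conclusions back. The bridge consists of three facts established earlier: by Lemma \ref{8} and Lemma \ref{9}, $\widetilde{f}$ is again an expansive homeomorphism with the specification property; by hypothesis $\widetilde{\varphi}\in\mathcal{V}_{\widetilde{f}}(\widetilde{X})$; and by Theorem \ref{5}, $P_{X,f}(\psi)=P^{PP}_{\widetilde{X},\widetilde{f}}(\widetilde{\psi})$ for any bounded $\psi\in C(X,\mathbb{R})$ admitting a continuous extension. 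The first move is to invoke Lemma \ref{**} to produce the equilibrium state: there is a unique $\widetilde{\mu}=\mu_{\widetilde{\varphi}}\in M(\widetilde{X},\widetilde{f})$ with $P^{PP}_{\widetilde{X},\widetilde{f}}(\widetilde{\varphi})=h_{\widetilde{\mu}}(\widetilde{f})+\int\widetilde{\varphi}\,d\widetilde{\mu}$, and since $\widetilde{\mu}$ is positive on open sets we get $\widetilde{\mu}(\{\infty\})=0$, so $\mu:=\widetilde{\mu}|_{\mathcal{B}(X)}$ lies in $M(X,f)$ with $h_\mu(f)=h_{\widetilde{\mu}}(\widetilde{f})$ and $\int_X\varphi\,d\mu=\int_{\widetilde{X}}\widetilde{\varphi}\,d\widetilde{\mu}$; this $\mu$ is the equilibrium state for $\varphi$.

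For parts (1), (2) and (5) I would argue as follows. For (1): since $\widetilde{\mu}(\{\infty\})=0$ and the Bowen balls $B_n(x,\varepsilon)$ computed in $X$ with the metric $d$ and in $\widetilde{X}$ with $\widetilde{d}$ agree for $x\in X$ and small $\varepsilon$ (the extra point $\infty$ is eventually excluded), the local entropy $h_\mu(f,x)$ and $h_{\widetilde{\mu}}(\widetilde{f},x)$ coincide for $\widetilde{\mu}$-a.e.\ $x\in X$; Theorem \ref{***}(1) applied to $\widetilde{\mu}$ then gives $h_\mu(f,x)=h_{\widetilde{\mu}}(\widetilde{f})=P^{PP}_{\widetilde{X},\widetilde{f}}(\widetilde{\varphi})-\int\widetilde{\varphi}\,d\widetilde{\mu}$, and Theorem \ref{5} rewrites the right side as $P_{X,f}(\varphi)-\int_X\varphi\,d\mu$. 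For (2): observe that each $q\varphi$ again extends continuously to $\widetilde{X}$ as $q\widetilde{\varphi}$, so by Theorem \ref{5} $P_{X,f}(q\varphi)=P^{PP}_{\widetilde{X},\widetilde{f}}(q\widetilde{\varphi})$; hence the function $T(q)=P_{X,f}(q\varphi)-qP_{X,f}(\varphi)$ is literally equal to the $\widetilde{T}(q)$ of Theorem \ref{***}(2). Convexity, $\mathcal{C}^1$-ness, the values at $q=0$ (where $T(0)=h(\widetilde{f})=h(f)$ by the Patr\~{a}o identity $h(f)=h(\widetilde{f})$) and $q=1$, and the derivative formula transfer verbatim, once one notes $\mu_q$ (the equilibrium state for $\varphi_q$) restricts from the equilibrium state $\widetilde{\mu}_q$ for $q\widetilde{\varphi}-P^{PP}(q\widetilde{\varphi})$ exactly as $\mu$ restricts from $\widetilde{\mu}$. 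Part (5) is the same kind of transfer: the correlation integrals $\int\mu(B_n(x,\varepsilon))^{q-1}\,d\mu$ over $X$ equal those over $\widetilde{X}$ (again because $\widetilde{\mu}$ gives no mass to $\infty$ and the Bowen balls agree), so the limit defining $h_\mu(f,q)$ exists and equals $h_{\widetilde{\mu}}(\widetilde{f},q)=-T(q)/(q-1)$ by Theorem \ref{***}(5), and the continuity in $q$ and the boundary values at $q=0,1$ follow.

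Parts (3) and (4) require the identification of the multifractal spectrum, and here the key point is that the level sets and the entropy functionals used to measure them must correspond across the compactification. I would first check that $K_\alpha=\{x\in X:h_\mu(f,x)=\alpha\}$ equals $\widetilde{K}_\alpha\cap X$ where $\widetilde{K}_\alpha=\{\widetilde{x}\in\widetilde{X}:h_{\widetilde{\mu}}(\widetilde{f},\widetilde{x})=\alpha\}$; this holds because local entropies agree on $X$ as noted, while the single point $\infty$ contributes nothing to topological entropy. Next, by Theorem \ref{5} applied with $\varphi=0$ (equivalently by Theorem 4.3 of \cite{MC}), the Ma-Cai entropy $h_{K_\alpha}(f)=h_{K_\alpha}^{PP}(\widetilde{f})=h_{\widetilde{K}_\alpha}^{PP}(\widetilde{f})=h_{\widetilde{K}_\alpha}(\widetilde{f})$, the last being the Bowen entropy in $\widetilde{X}$ since $\widetilde{X}$ is compact. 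Then Theorem \ref{***}(3)--(4) applied to $\widetilde{f},\widetilde{\varphi},\widetilde{\mu}$ yields $\mathcal{E}_E(\alpha(q))=T(q)+q\alpha(q)$, the emptiness of $K_\alpha$ outside $[\underline{\alpha},\overline{\alpha}]$, the Legendre-transform relations, and the strict concavity and differentiability of $\mathcal{E}_E$, with $T$ and $\alpha(q)=-T'(q)$ the same objects on both sides. The main obstacle I anticipate is precisely this step of verifying that the Bowen balls and the induced measures genuinely coincide under restriction--expansion, so that the level sets $K_\alpha$ and $\widetilde{K}_\alpha$ match and no mass or dynamical complexity is hidden at $\infty$; once that bookkeeping is pinned down, every assertion is a direct pullback of Theorem \ref{***} through Theorem \ref{5}, Lemma \ref{8}, Lemma \ref{9} and Lemma \ref{**}.
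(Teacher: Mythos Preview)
Your proposal is correct and follows essentially the same route as the paper: lift to the one-point compactification via Lemmas \ref{8}, \ref{9} and \ref{**}, apply Theorem \ref{***} to $(\widetilde{X},\widetilde{f},\widetilde{\varphi},\widetilde{\mu})$, and pull each conclusion back using the pressure identities and $\widetilde{\mu}(\{\infty\})=0$. One small correction: for the level-set entropies in (3)--(4) the relevant identity $h_{K_\alpha}(f)=h^{PP}_{K_\alpha}(\widetilde{f})$ is Theorem \ref{4} (arbitrary $Z\subset X$), not Theorem \ref{5} (which is the special case $Z=X$); your parenthetical citation of Theorem 4.3 of \cite{MC} already points to the right content, and the paper likewise combines Theorem \ref{4} with Theorem \ref{c}(3) and $h_{\{\infty\}}(\widetilde{f})=0$ to pass from $K_\alpha$ to $\widetilde{K}_\alpha$.
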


\begin{proof}
(1)~By Lemma \ref{8} and Lemma \ref{9} , we have that ~$\widetilde{f}:\widetilde{X}\rightarrow\widetilde{X}$ be an expansive homeomorphism with specification. By Lemma \ref{*}
, $\widetilde{\varphi}$ has an equilibrium state ~$\widetilde{\mu}$,
Applying Theorem \ref{***}, we have that for ~$\widetilde{\mu}-a.e.x\in \widetilde{X}$ the local entropy exists and
$$h_{\widetilde{\mu}}(\widetilde{f},x)=h_{\widetilde{\mu}}(\widetilde{f})
=P_{\widetilde{X},\widetilde{f}}(\widetilde{\varphi})- \int_{\widetilde{X}}\widetilde{\varphi}~d\widetilde{\mu}.$$
By Lemma \ref{**}, $\varphi$ has an equilibrium state $\mu=\widetilde{\mu}|_{\mathcal{B}(X)},$
$h_{\widetilde{\mu}}(\widetilde{f})=h_{\mu}(f),$ $\int_{\widetilde{X}}\widetilde{\varphi}~d\widetilde{\mu}=\int_{X}\varphi d\mu.$ Moreover, for $\mu-a.e.x\in X,$
$h_{\widetilde{\mu}}(\widetilde{f},x)=h_{\mu}(f,x).$
Combining Theorem \ref{5}, we have that
for ~$\mu-a.e.x\in X$ the local entropy exists and
$$h_\mu(f,x)=h_\mu(f)=P_{X,f}(\varphi)-\int_{X}\varphi d\mu. $$

(2)~ By Theorem \ref{5}, we have that ~$P_{X,f}(\varphi)=P_{\widetilde{X},\widetilde{f}}(\widetilde{\varphi}),$ then
$$T(q)=P_{X,f}(q\varphi)-qP_{X,f}(\varphi)=P_{\widetilde{X},\widetilde{f}}(q\widetilde{\varphi})
-qP_{\widetilde{X},\widetilde{f}}(\widetilde{\varphi}):=\widetilde{T}(q), \forall q\in \mathbb{R}.$$
Applying Theorem \ref{***}, we have that ~$T(q)$ is a convex ~$\mathcal{C}^1$ function of ~$q,$
~$T(0)=\widetilde{T}(0)
=h(\widetilde{f}).$ By a result of Patr\~{a}o\cite{Patr}, we have that $h(\widetilde{f})=h(f),$ then $T(0)=h(f).$ It is easy to see that $~T(1)=0.$ For every $q\in \mathbb{R}$ one has
\begin{eqnarray*}
T^{'}(q)&=&\widetilde{T}^{'}(q)\\
&=&\int_{\widetilde{X}}\widetilde{\varphi}d\widetilde{\mu}_{q}-P_{\widetilde{X},\widetilde{f}}(\widetilde{\varphi})\\
&=&\int_{X}\varphi d\mu_{q}-P_{X,f}(\varphi)\leq 0,
\end{eqnarray*}
where $\widetilde{\mu}_{q}$ is the equilibrium state for $\widetilde{\varphi}_q=q\widetilde{\varphi}-P_{\widetilde{X},\widetilde{f}}(q\widetilde{\varphi})$ and ~$\mu_q=\widetilde{\mu}_{q}|_{\mathcal{B}(X)}$ is the equilibrium state for ~$\varphi_q=q\varphi-P_{X}(q\varphi).$

(3) Let ~$\alpha(q)=-T^{'}(q)$, then from (2) we have that $\alpha(q)=-\widetilde{T}^{'}(q).$
Then
$$\widetilde{K}_{\alpha(q)}=\{x\in \widetilde{X}:
h_{\widetilde{\mu}}(\widetilde{f},x)=\alpha(q)\}
=\{x\in X:h_{\mu}(f,x)=\alpha(q)\}=K_{\alpha(q)} ,$$
or
$$\widetilde{K}_{\alpha(q)}=\{x\in \widetilde{X}:
h_{\widetilde{\mu}}(\widetilde{f},x)=\alpha(q)\}
=\{x\in X:h_{\mu}(f,x)=\alpha(q)\}\cup\{\infty\}=K_{\alpha(q)}\cup\{\infty\} .$$
Combining the fact $h_{\{\infty\}}(\widetilde{f})=0,$ Theorem \ref{4} and Theorem \ref{c}, we have that
$$\mathcal{E}_E (\alpha(q)):=h_{K_{\alpha(q)}}(f)=h_{K_{\alpha(q)}}(\widetilde{f})
=h_{\widetilde{K}_{\alpha(q)}}(\widetilde{f})
=\widetilde{T}(q)+q\alpha(q)=T(q)+q\alpha(q).
$$
The others hold from Theorem\ref{***}.

(4) For any ~$\alpha\in(\underline{\alpha},\overline{\alpha}) $, we have that
$$\widetilde{K}_{\alpha}=\{x\in \widetilde{X}:
h_{\widetilde{\mu}}(\widetilde{f},x)=\alpha\}
=\{x\in X:h_{\mu}(f,x)=\alpha\}=K_{\alpha} ,$$
or
$$\widetilde{K}_{\alpha}=\{x\in \widetilde{X}:
h_{\widetilde{\mu}}(\widetilde{f},x)=\alpha\}
=\{x\in X:h_{\mu}(f,x)=\alpha\}\cup\{\infty\}=K_{\alpha}\cup\{\infty\} .$$
If the equilibrium state ~$\mu$ for the potential ~$\varphi$ is not a measure of maximal entropy, then combining Theorem \ref{c}, Theorem \ref{4} and Theorem \ref{***}, we have that
$$\mathcal{E}_E (\alpha):=h_{K_{\alpha}}(f)=h_{K_{\alpha}}(\widetilde{f})=h_{\widetilde{K}_{\alpha}}(\widetilde{f})
=\inf_{q\in\mathbb{R}}(\widetilde{T}(q)+q\alpha)=\inf_{q\in\mathbb{R}}(T(q)+q\alpha),$$
$$T(q)=\widetilde{T}(q)=\sup_{\alpha\in(\underline{\alpha},\overline{\alpha})}
(\mathcal{E}_E(\alpha)-q\alpha).$$
The others hold from Theorem\ref{***}.

(5) Since $\mu=\widetilde{\mu}|_{\mathcal{B}(X)}$ and $\widetilde{\mu}(\{\infty\})=0,$ one has
$$ \lim_{\varepsilon\rightarrow0}\lim_{n\rightarrow\infty}
-\frac{1}{n(q-1)}\log\int_{X}\mu(B_n(x,\varepsilon))^{q-1}d\mu $$
$$=\lim_{\varepsilon\rightarrow0}\lim_{n\rightarrow\infty}
-\frac{1}{n(q-1)}\log\int_{\widetilde{X}}\widetilde{\mu}(\widetilde{B}_n(x,\varepsilon))^{q-1}d\widetilde{\mu}.$$
By Theorem \ref{***}, the second limit exists, so the first limit exists too, which denoted by
~$h_\mu(f,q)$. Then ~$h_\mu(f,q)=h_{\widetilde{\mu}}(\widetilde{f},q). $
The other three equations hold from Theorem \ref{***}.

\end{proof}

{\bf Acknowledgement.} The work was partially supported by the Fundamental Research Fund for the Central Universities (2013ZZ0085).

\newpage

\bibliographystyle{amsplain}

\end{document}